\documentclass[reqno,a4paper]{amsart}
\usepackage[latin1]{inputenc}
\usepackage[english]{babel}
\usepackage{eucal,amsfonts,amssymb,amsmath,amsthm,epsfig,mathrsfs}
\usepackage{dsfont,cancel,soul}
\usepackage{color}
\usepackage{amscd,amsxtra}
\usepackage{enumerate}
\usepackage{latexsym}
\usepackage{bm}

\allowdisplaybreaks

\newcounter{ipotesi}

\Alph{ipotesi}
 \makeatletter \@addtoreset{equation}{section}

\makeatother \makeatletter

\newtheorem{thm}{Theorem}[section]
\newtheorem{hyp}[thm]{Hypotheses}{\rm}
{\rm}
\newtheorem{lemm}[thm]{Lemma}
\newtheorem{coro}[thm]{Corollary}
\newtheorem{prop}[thm]{Proposition}

\newtheorem{rmk}[thm]{Remark}{\rm}

\newcounter{parentenv}

\newcommand{\R}{{\mathbb R}}

\newcommand{\N}{{\mathbb N}}

\newcommand{\Rd}{\mathbb R^d}

\newcommand{\Rm}{\mathbb R^m}
\newcommand{\Cm}{\mathbb C^m}
\newcommand{\Om}{{\Omega}}

\newcommand{\re}{{\rm{Re}}\,}
\newcommand{\im}{{\rm{Im}}\,}

\newcommand{\T}{{\bm T}}

\newcommand{\g}{{\bm g}}

\newcommand{\f}{{\bm f}}

\newcommand{\uu}{{\bm u}}
\newcommand{\A}{\bm{\mathcal A}}

\newcommand{\vv}{{\bm v}}
\newcommand{\ww}{{\bm w}}
\newcommand{\zz}{{\bm z}}

\newcommand{\z}{{\bm z}}

\renewcommand{\tilde}[1]{\widetilde{#1}}

\begin{document}

\title[Generation results for strongly coupled elliptic operators]{Generation of semigroups associated to strongly coupled elliptic operator in $L^p(\Rd;\Rm)$}
\author[L. Angiuli, L. Lorenzi and E.M. Mangino ]{Luciana Angiuli$^*$, Luca Lorenzi, Elisabetta M. Mangino}
\address{L.A. \& E.M.M.:  Dipartimento di Matematica e Fisica ``Ennio De Giorgi'', Universit\`a del Salento, Via per Arnesano, I-73100 LECCE, Italy}
\address{L.L.: Dipartimento di Scienze Matematiche, Fisiche e Informatiche, Plesso di Mate\-matica, Universit\`a degli Studi di Parma, Parco Area delle Scienze 53/A, I-43124 PARMA, Italy}
\email{luciana.angiuli@unisalento.it}
\email{luca.lorenzi@unipr.it}
\email{elisabetta.mangino@unisalento.it}
\thanks{This article is based upon work from COST Action CA18232 MAT-DYN-NET, supported by COST (European Cooperation in Science and Technology). The authors are also members of G.N.A.M.P.A. of the Italian Istituto Nazionale di Alta Matematica (INdAM)}
\keywords{Strongly coupled vector-valued elliptic operators, unbounded coefficients, vector valued semigroups, Lebesgue  $L^p$-spaces}
\subjclass[2020]{35J47, 35K45, 47D06}

\begin{abstract}
A class of vector-valued elliptic operators with unbounded coefficients, coupled up to the second-order is investigated in the Lebesgue space $L^p(\Rd;\Rm)$
with $p \in (1,\infty)$, providing  sufficient conditions for the generation of an analytic $C_0$-semigroup $\T(t)$. Under further assumptions, a characterization of the domain of the infinitesimal generator is given.
\end{abstract}

\maketitle

\section{Introduction}

In this paper we deal with second-order elliptic operators acting on smooth vector-valued functions $\uu:\Rd\to\R^m$ as follows:
\begin{equation}\label{operator}
\A\uu=\sum_{h,k=1}^d D_h(Q^{hk}D_k \uu)-V\uu=\A_0\uu-V\uu,
\end{equation}
where $Q^{hk}$, ($h,k=1, \ldots, d$) and $V$ are $m\times m$ matrix-valued functions. 
More precisely, we are interested in studying  when these operators generate strongly continuous semigroups in the framework of $L^p$-spaces with respect to the Lebesgue measure, achieving also information on the regularity of the semigroups and a description of the domain of their generators.
We emphasize that the systems associated with these operators can  be  strongly coupled, i.e., the second-order terms can be  coupled to each other, and that the matrix-valued functions $Q^{hk}$ and $V$ are allowed to be unbounded.

The study of operators with a second-order coupling  gives rise to several technical obstacles. 
The first one relies in the study of the dissipativity in $L^p$, thoroughly investigated in the monograph \cite[Section 4.3]{CM}:  one cannot expect dissipativity to hold true in $L^p$ for any $1\leq p\leq \infty$, unless $Q^{hk}=q_{hk} I$, where $q_{hk}$ is a scalar function.
This feature is connected with  the lack of a ``parabolic maximum modulus principle'' for the systems associated with  \eqref{operator} (see \cite{KM}), which prevents from using extrapolation arguments from  $L^\infty$ or from the space of bounded and continuous functions (where some results are available when a variant of the maximum principle holds, see e.g., \cite{AALT, AL, DL}) to $L^p$-spaces and conversely.
These obstructions have been highlighted in several papers about elliptic operators with complex coefficients, which can be clearly interpreted as vector-valued operators with real coefficients.
For e.g., it is known that if  $\Omega$ is an open subset of $\Rd$ and  $Q\in L^\infty(\Omega, {\mathbb C}^{d\times d})$ is a matrix-valued function such that the essential infimum of the minimum eigenvalue of the matrix $Q(x)$, when $x$ varies in $\Omega$, is positive, then  there exists $\varepsilon_0=\varepsilon_0(Q)>0$ such that $A= {\rm div}(Q\nabla )$, endowed with suitable boundary conditions, generates an analytic semigroup in $L^p(\Omega, \mathbb C)$ if 
\begin{align*}
\left\vert \frac 1 p -\frac 1 2\right\vert \leq \frac 1 d +\varepsilon_0
\end{align*}
(see \cite{Au,davies_1995, egert1,Tolk}).
In general this condition is sharp in the sense that, if 
$\Big |\frac{1}{p} -\frac{1}{2}\Big |>\frac{1}{d}$, 
then there exists a matrix-valued function $Q$ such that $A$ does not generate a semigroup in $L^p(\Omega)$ (also when $\Omega=\Rd$), see \cite{HMM}.
In order to  give lower bounds for $\varepsilon_0$, the notion of $p$-ellipticity for the matrix-valued function $Q$ was introduced  in \cite{CD}, and the estimates therein contained  were improved and generalized  e.g., in \cite{egert, TerElst}. 
We stress that the operators in the previous papers are pure second-order operators with bounded coefficients.

Various cases of operators with unbounded coefficients and coupled up to the first order   have been studied in the $L^p$-setting, adopting diversified techniques, e.g., a Dore-Venni type theorem on sums of noncommuting operators  due to Monniaux and Pr\"uss  (\cite{monniaux-pruss}) in \cite{HLPRS,KLMR, KMR}, a scalar perturbation argument in the potential in  \cite{AngLorMan, MR}, an extrapolation argument from the setting  of bounded and continuous functions in \cite{AngLorPal}. In \cite{ALMR}, using  quantitative assumptions on the coefficients that allow to suitably control  the growth of the coefficients of the operator in terms of a scalar smooth function $v$,  the generation of an analytic semigroup,  a description of the domain of the generator and integrability conditions on the semigroup are proved. Moreover, in \cite{ALMR1} Gaussian estimates are provided. We refer to  \cite{ALMR} for a detailed and exhaustive comparison of all these results. 

To the authors' best knowledge, the case of operators  with second-order coupling and  unbounded coefficients  has not been yet considered in the literature. 

As is to be expected in view of the preceding considerations, the results in this paper will hold true for $p$ varying in a bounded neighbourhood of $2$, which depends on the coefficients of the operator, and, unless considering operators coupled up to  the first order, it will be not possible to extend the results to the whole interval $(1,\infty)$.

As in the scalar case, also the $L^p$-spaces related to the so-called systems of invariant measures are of particular interest. Unfortunately, at the best of our knowledge only few results are available in the literature (see \cite{AAL_Inv,AAL_Inv1,AL}).

The paper  is organized as follows.
The second section is devoted to some preparatory results that however have their own independent interest. Indeed, we prove local  regularity results for the distributional solutions of elliptic systems with possibly unbounded coefficients. In  the third section the main assumptions on operators \eqref{operator}  are introduced: we assume that the matrices appearing in the  second-order part of the operator are  tamed, in the sense of sesquilinear forms, by a single definite positive matrix.  The hypotheses allow to use a suitable Cauchy-Schwarz inequality that will be crucial in proving  that   operator \eqref{operator}, endowed with its maximal domain,  generates an analytic semigroup in $L^p(\Rd, \Cm)$, with $p$ satisfying the condition 
\begin{eqnarray*}
\left\vert \frac 1 p -\frac 1 2\right\vert\leq K,
\end{eqnarray*}
where $K$ is a constant depending on the coefficients of the second-order part of \eqref{operator}.
It is worth observing that our techniques differ from those adopted in \cite{CD, egert, TerElst}, where  the authors proceed with an extrapolation argument from the space $L^2$ using  Sobolev embeddings. 

Assuming further hypotheses on the growth of the coefficients of \eqref{operator}, it is possible to show that actually the maximal domain coincides with the minimal domain. This is the content of Section \ref{sect-4}. Section \ref{sect-5} is devoted to describe some classes of examples. Finally, two appendices with auxiliary results from Linear Algebra and some technicalities close the paper.

\medskip

\noindent
{\bf Notation.}
Let $d, m\in\N$ and let $\mathbb K=\mathbb R$ or $\mathbb K=\mathbb C$. We denote  by $(\cdot, \cdot)$ and by $|\cdot|$, respectively, the Euclidean inner product and the Euclidean norm in $\mathbb K^m$.
Vector-valued functions are displayed in bold style. Given a function $\uu: \Omega \subseteq \Rd  \rightarrow {\mathbb K}^m$, we denote by $u_k$ its $k$-th component. For every $p\in [1,\infty)$,  $L^p(\Rd, \mathbb K^m)$  denotes the classical vector-valued Lebesgue space endowed with the norm
$\|\f\|_p=(\int_{\Rd} |\f(x)|^pdx)^{1/p}$.
The canonical pairing between $L^p(\Rd, \mathbb K^m)$ and $L^{p'}(\Rd, \mathbb K^m)$ ($p'$ being the index conjugate to $p$), i.e., the integral over $\Rd$ of the function
$x\mapsto (\uu(x), \vv(x))$ when $\uu\in L^p(\Rd, \mathbb K^m)$ and $\vv\in L^{p'}(\Rd, \mathbb K^m)$, is denoted by $\langle \uu,\vv\rangle_{p,p'}$.
For $k\in\mathbb N$, $W^{k,p}(\Rd, \mathbb K^m)$ is the classical vector-valued Sobolev space, i.e., the space of all functions $\uu\in L^p(\Rd, \mathbb K^m)$ whose components have distributional derivatives up to the order $k$, which belong to $L^p(\R^d, \mathbb K^m)$. The norm of $W^{k,p}(\Rd, \mathbb K^m)$ is
denoted by $\|\cdot\|_{k,p}$. When $\mathbb K=\R$ and $m=1$, we simply write $L^p(\Rd)$ and $W^{k,p}(\Rd)$.
By $C^{\infty}_c(\Rd;\mathbb K^m)$, we denote the set all the vector-valued functions which have compact support in $\Rd$ and are infinitely many times differentiable. Similarly, for every $k\in\N$,
$C^k_c(\Rd;\mathbb K^m)$ denotes the set of all the compactly supported functions $\uu:\Rd\to\mathbb K^m$ which are continuously differentiable on $\Rd$ up to the $k$-th order.
We use the subscript ``$b$'' to stress that the functions that we consider are bounded on $\Rd$, together with their derivatives up to the order $k$. When $\mathbb K=\R$ and $m=1$, we simply write $C^{\infty}_c(\Rd)$ and $C^k_c(\Rd)$. If $X(\Rd;\mathbb{K}^m)$ is one of the functional spaces above, we use the notation $X_{\rm{loc}}(\Rd;\mathbb{K}^m)$ to denote the set of functions which belong to $X(\mathcal{K};\mathbb{K}^m)$ for every compact set $\mathcal{K}\subset \Rd$.
           
Finally, given a vector-valued function $\uu$ and $\varepsilon>0$, we denote by $|\uu|_{\varepsilon}$ the real-valued function defined by
\begin{eqnarray*}
|\uu|_{\varepsilon}=\left\{\begin{array}{ll}
(|\uu|^2+\varepsilon)^{\frac{1}{2}},\qquad\;\, &p \in (1,2),\\[1mm]
|\uu|,\qquad\;\, &p \in [2,+\infty).
\end{array}
\right.
\end{eqnarray*}

\section{Preliminary results}
\label{sect-2}
Before stating our main assumptions on the coefficients of the operator $\A$ in \eqref{operator}, we provide some regularity results for distributional solutions to systems of elliptic equations. The scalar counterpart of such results can be found, for instance, in \cite[Theorem D.1.4]{newbook}.

\begin{prop}\label{reg_thm}
Fix $p \in (1,\infty)$ and assume that the diffusion coefficients of the operator \eqref{operator} satisfy the Legendre-Hadamard condition. Then, the following properties are satisfied:
\begin{enumerate}[\rm(i)]
\item 
if $q^{hk}_{ij}\in C^1_b(\Rd)$,
$v_{ij}\in L^{\infty}(\Rd)$ for every $h,k=1,\ldots.d$, $i,j=1,\ldots,m$, and $\uu\in L^p(\Rd;\R^m)$ satisfies the estimate
\begin{equation}\label{new}
\bigg |\int_{\Rd}(\uu,\A{\bm\varphi}) dx\bigg |\le C\|{\bm\varphi}\|_{W^{1,p'}(\Rd;\Rm)}
\end{equation}
for every ${\bm\varphi} \in C^\infty_c(\Rd;\Rm)$ and some positive constant $C$, independent of $\bm \varphi$, then $\uu$ belongs to $W^{1,p}(\Rd;\Rm)$;
\item 
if $q^{hk}_{ij}\in C^1(\Rd)$, $v_{ij}\in L^{\infty}_{\rm loc}(\Rd)$, for every $h,k=1,\ldots,d$, $i,j=1,\ldots,m$, and $\uu\in L^p_{\rm loc}(\Rd;\R^m)$ satisfies estimate \eqref{new} for every $\bm{\varphi}\in C^{\infty}_c(\Rd;\Rm)$, then $\uu$ belongs to $W^{1,p}_{\rm loc}(\Rd;\Rm)$;
\item 
if $q^{hk}_{ij}\in C^1_b(\Rd)$, $v_{ij}\in L^{\infty}(\Rd)$, for every $h,k=1,\ldots,d$ $i,j=1,\ldots,m$, and $\f,\uu\in L^p(\Rd;\R^m)$ satisfy the condition
\begin{equation}\label{weak}
\int_{\Rd}(\uu, \A \bm \varphi) dx=\int_{\Rd}(\f,\bm \varphi)dx
\end{equation}
for every $\bm \varphi \in C^\infty_c(\Rd;\Rm)$, then $\uu\in W^{2,p}(\Rd;\Rm)$;
\item 
if $q^{hk}_{ij}\in C^1(\Rd)$,
 $v_{ij}\in L^{\infty}_{\rm loc}(\Rd)$ for every $h,k=1, \ldots,d$ and $i,j=1,\ldots,m$,  and $\f,\uu\in L^p_{\rm loc}(\Rd;\R^m)$ satisfy equation \eqref{weak} for any $\bm \varphi \in C^\infty_c(\Rd;\Rm)$, then $\uu\in W^{2,p}_{\rm loc}(\Rd;\Rm)$.
\end{enumerate}
\end{prop}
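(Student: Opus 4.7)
My strategy is to derive (i) and (iii) from a global solvability and duality argument, and to obtain the local statements (ii) and (iv) from (i) and (iii) by a cut-off bootstrap. The engine is the classical $L^p$-theory for vector-valued second-order elliptic systems with $C^1_b$-coefficients satisfying the Legendre--Hadamard condition: for $\lambda>0$ large enough and for every $q\in(1,\infty)$, both $\lambda-\A$ and its formal adjoint $\lambda-\A^*=\sum_{h,k}D_k((Q^{hk})^T D_h\cdot)-V^T\cdot$ are isomorphisms from $W^{2,q}(\Rd;\Rm)$ onto $L^q(\Rd;\Rm)$ and from $W^{1,q}(\Rd;\Rm)$ onto $W^{-1,q}(\Rd;\Rm)$. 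This follows from Mikhlin--Calder\'on--Zygmund multiplier estimates for the constant-coefficient symbol (invertible under Legendre--Hadamard once $\lambda$ is large) combined with a standard freezing-of-coefficients perturbation relying on the uniform continuity granted by $C^1_b$.

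\smallskip
\textbf{Proofs of (iii) and (i).} Rewriting \eqref{weak} as $\A^*\uu=\f$ in $\mathcal D'(\Rd;\Rm)$, pick $\ww\in W^{2,p}(\Rd;\Rm)$ with $\lambda\ww-\A^*\ww=\lambda\uu-\f$. Setting $\vv:=\ww-\uu\in L^p$, the identity
\begin{equation*}
\int_{\Rd}(\vv,(\lambda-\A)\bm\varphi)\,dx=0, \qquad \bm\varphi\in C^\infty_c(\Rd;\Rm),
\end{equation*}
is immediate. Given any $\g\in L^{p'}(\Rd;\Rm)$, solve $(\lambda-\A)\bm\psi=\g$ in $W^{2,p'}$ via the input and approximate $\bm\psi$ in $W^{2,p'}$ by $\bm\varphi_n\in C^\infty_c$: continuity of $\A\colon W^{2,p'}\to L^{p'}$ yields $(\lambda-\A)\bm\varphi_n\to\g$ in $L^{p'}$, whence $\int(\vv,\g)dx=0$. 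Arbitrariness of $\g$ forces $\vv=0$ and thus $\uu=\ww\in W^{2,p}$. For (i), Riesz's representation theorem converts the bound \eqref{new} into a distributional equation $\A^*\uu=\g_0-\sum_j D_j\g_j\in W^{-1,p}$ with $\g_0,\ldots,\g_d\in L^p$, and the same duality argument built on the $W^{1,p}$/$W^{-1,p}$-isomorphism $\lambda-\A^*$ yields $\uu\in W^{1,p}$.

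\smallskip
\textbf{Localization for (ii), (iv).} For $\eta\in C^\infty_c(\Rd)$ and $\bm\varphi\in C^\infty_c$, a direct computation gives
\begin{equation*}
\int(\eta\uu,\A\bm\varphi)\,dx=\int(\uu,\A(\eta\bm\varphi))\,dx-\int(\uu,[\A,\eta]\bm\varphi)\,dx,
\end{equation*}
where $[\A,\eta]$ is a first-order operator in $\bm\varphi$ with coefficients compactly supported in $\supp\eta$ and depending on $Q^{hk}$, $D\eta$, $D^2\eta$. After modifying $Q^{hk}$ and $V$ outside a neighbourhood of $\supp\eta$ so that they become globally $C^1_b$ and $L^\infty$, the commutator term is controlled by $C\|\bm\varphi\|_{W^{1,p'}}$, so $\eta\uu$ meets the hypothesis of (i) for the modified operator: this gives $\eta\uu\in W^{1,p}$, i.e.\ (ii). Once $\uu\in W^{1,p}_{\rm loc}$ is known, integrating by parts in the commutator rewrites its action as $\int(\g,\bm\varphi)dx$ with $\g\in L^p$ compactly supported; so $\eta\uu$ now meets the hypothesis of (iii), yielding $\eta\uu\in W^{2,p}$ and thus (iv). Arbitrariness of $\eta$ concludes.

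\smallskip
\textbf{Main obstacle.} The crucial ingredient is the input isomorphism theorem. For constant coefficients, invertibility of $\lambda I+\sum_{h,k}Q^{hk}\xi_h\xi_k$ under Legendre--Hadamard combined with Mikhlin's multiplier theorem gives $W^{2,q}$-estimates for every $q\in(1,\infty)$; passing from frozen to $C^1_b$-coefficients on the whole $\Rd$ by a freezing argument then requires some care, because the oscillation of $Q^{hk}$ cannot be made globally small, only locally by continuity. This is precisely where the $C^1_b$ assumption in (i), (iii) and the $C^1$ assumption in (ii), (iv) come into play, and where the adaptation from the scalar template of \cite[Theorem D.1.4]{newbook} to the vector-valued setting needs to be performed with attention.
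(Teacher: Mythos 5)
Your proposal is correct in broad outline and parts (ii), (iii), (iv) follow essentially the same plan as the paper: for (iii), both invert a suitable second-order operator in $W^{2,p}$ and use duality plus surjectivity in $W^{2,p'}$ to identify $\uu$ with that inverse; for (ii) and (iv), both localize by multiplying with a cut-off, modify the coefficients outside a large ball to be globally $C^1_b$ (resp.\ $L^\infty$), and invoke (i) (resp.\ (iii)) for the modified operator. One minor point about (iii): as in the paper, you should first move the zero-order term to the right-hand side, i.e.\ work with $\A_0^*$ rather than $\A^*$, since the known $W^{2,p}/L^p$ isomorphism result you would cite concerns pure second-order divergence-form systems; with $V^T$ absorbed into $\f$ this is only a notational adjustment.

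Where your route genuinely diverges is part (i). You identify the bound \eqref{new} with the statement $\A^*\uu\in W^{-1,p}$ and then call on a $W^{1,p}/W^{-1,p}$-isomorphism for $\lambda-\A^*$. The paper deliberately avoids this extra input. Its proof passes to the non-divergence operator $\tilde\A_0\uu=\sum_{h,k}Q^{hk}D_{hk}\uu$, shows (using \eqref{new}, boundedness of $V$, and $C^1_b$ of the $Q^{hk}$) that
\[
\Bigl|\int_{\Rd}(\tau_s\uu,\lambda\bm\varphi-\tilde\A_0\bm\varphi)\,dx\Bigr|\le C\|\bm\varphi\|_{W^{2,p'}(\Rd;\Rm)}
\]
uniformly in $s$, then tests with $\bm\varphi=(\lambda-\tilde\A_0)^{-1}(\tau_s\uu|\tau_s\uu|^{p-2})$ and uses only the $W^{2,p'}/L^{p'}$-isomorphism (available from the cited Miyazaki result) to bound $\|\tau_s\uu\|_p$ uniformly; this is the classical Nirenberg difference-quotient device. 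The advantage of the paper's choice is precisely that it keeps the single isomorphism citation; the $W^{1,p}/W^{-1,p}$-version for systems under mere Legendre--Hadamard with $C^1_b$ coefficients is true but is not something the paper has in its toolbox, and you rightly flag that establishing it (freezing of coefficients in the first-order scale) would need independent work. So your proof is not wrong, but if you want to stay within the paper's sources you should either supply a reference for the $W^{1,p}/W^{-1,p}$-isomorphism or replace this step with the translation argument.
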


\begin{proof}
(i) Let $\tilde\A_0:W^{2,p'}(\Rd;\Rm)\to L^{p'}(\Rd;\Rm)$ be the operator defined by
\begin{equation*}
\tilde\A_0\uu=\sum_{h,k=1}^d Q^{hk}D_{hk}\uu
\end{equation*}
for every $\uu\in W^{2,p'}(\Rd;\Rm)$.
From estimate \eqref{new} and the boundedness of the coefficients $v_{ij}$, we infer that
\begin{align}
\bigg |\int_{\Rd}(\uu, \tilde\A_0 {\bm \varphi})dx\bigg |\le& 
\bigg |\int_{\Rd}(\uu, \A {\bm \varphi})dx\bigg |
+\bigg |\int_{\Rd}\sum_{h,k=1}^d(\uu, D_hQ^{hk}D_k\bm\varphi)dx\bigg |\notag\\
&+ \bigg |\int_{\Rd} (\uu,V{\bm \varphi}) dx\bigg |\notag\\
\le & C_1\|{\bm \varphi}\|_{W^{1,p'}(\Rd;\Rm)}
\label{uno}
\end{align}
for every ${\bm \varphi}\in C^\infty_c(\Rd;\Rm)$ and a positive constant $C_1$, independent of $\bm \varphi$.

For every $s\in \R^d$ we set $\tau_s \uu=|s|^{-1}(\uu(\cdot+s)-\uu)$. A straightforward change of variables shows that
\begin{align*}
\int_{\Rd}( \tau_s \uu, \widetilde\A_0 {\bm\varphi}) dx= |s|^{-1}\sum_{i,j=1}^m\sum_{h,k=1}^d\int_{\Rd}u_i[q_{ij}^{hk}(\cdot-s)D_{hk}\varphi_j(\cdot-s)-q_{ij}^{hk}D_{hk}\varphi_j] dx.
\end{align*}
Adding and subtracting the term 
$\displaystyle|s|^{-1}\sum_{i,j=1}^m\sum_{h,k=1}^d\int_{\Rd}u_iq_{ij}^{hk}D_{hk}\varphi_j(\cdot-s)dx$
and using \eqref{uno} and the fact that the coefficients $q^{hk}_{ij}$ belong to $C^1_b(\Rd)$,
we obtain
\begin{align*}
\bigg |\int_{\Rd}( \tau_s \uu,\widetilde\A_0 {\bm\varphi}) dx\bigg |\le & \bigg |\int_{\Rd}(\uu, \widetilde\A_0 ({\tau_{-s}\bm\varphi)})dx\bigg |\\
&+\bigg |\int_{\Rd}\bigg( \uu, \sum_{h,k=1}^d( \tau_{-s}Q^{hk})D_{hk}{\bm \varphi}(\cdot-s)\bigg) dx\bigg|\\
\le& C_1\|\tau_{-s}\bm \varphi\|_{W^{1,p'}(\Rd;\R^m)}\\
&+\|\uu\|_{L^p(\Rd;\R^m)}\sum_{h,k=1}^d\|\tau_{-s}Q^{hk}\|_\infty\|D_{hk}\bm \varphi\|_{L^{p'}(\Rd;\R^m)}\\
\le& C_2\|{\bm\varphi}\|_{W^{2,p'}(\Rd;\Rm)}
\end{align*}
for every ${\bm \varphi}\in C^\infty_c(\Rd;\Rm)$ and some positive constant $C_2$ independent of $s$ and $\bm{\varphi}$. Thus,
\begin{align}\label{befana}
\bigg |\int_{\Rd}(\tau_s \uu,\lambda{\bm \varphi}-\widetilde\A_0{\bm \varphi}) dx\bigg |=&\bigg |\int_{\Rd}[\lambda( \uu,\tau_{-s} {\bm \varphi})-(\tau_s \uu,\widetilde\A_0{\bm \varphi})] dx\bigg |\notag\\
\le & C_3\|{\bm\varphi}\|_{W^{2,p'}(\Rd;\Rm)}
\end{align}
for every ${\bm \varphi}\in C^\infty_c(\Rd;\Rm)$, every $\lambda>0$ and some positive constant $C_3$, independent of $\bm\varphi$ and $s$.

Estimate \eqref{befana} can be extended by density to any $\bm\varphi\in W^{2,p'}(\Rd;\R^m)$. Thus, since the operator $\lambda-\widetilde\A_0$ is invertible from $W^{2,p'}(\Rd;\R^m)$ to $L^{p'}(\Rd;\Rm)$ for a large $\lambda$, thanks to \cite[Theorem 2.1]{miyazaki}, we can take $\bm \varphi=(\lambda-\widetilde\A_0)^{-1}(\tau_s\uu|\tau_s \uu|^{p-2})$ and get
\begin{equation}\label{bef1}
\|\bm \varphi\|_{W^{2,p'}(\Rd;\R^m)}\le C_3\|\tau_s\uu|\tau_s \uu|^{p-2}\|_{L^{p'}(\Rd;\Rm)}= C_4\|\tau_s \uu\|_{L^p(\Rd;\R^m)}^{p-1}
\end{equation}
for some positive constants $C_3$ and  $C_4$, independent of $s$. Replacing such a function $\bm \varphi$ in \eqref{befana}, thanks to estimate \eqref{bef1} we deduce that
$\|\tau_s \uu\|_{L^p(\Rd;\R^m)}\le C_5$
for some positive constant $C_5$, independent of $s$. Consequently $\uu\in W^{1,p}(\Rd;\Rm)$.

(ii) Fix ${\bm \varphi}\in C^\infty_c(\Rd;\Rm)$ and, for a fixed $r>0$, let us consider a function $\psi_r\in C^\infty_c(\Rd)$ such that $\chi_{B(0,r)}\le \psi_r \le \chi_{B(0,2r)}$. It is straightforward to deduce that
\begin{equation}\label{est}\psi_r \A{\bm \varphi}=\A(\psi_r \bm \varphi)-\sum_{h,k=1}^d Q^{hk}D_k {\bm \varphi}D_h \psi_r
\end{equation}
so that
\begin{align*}
\int_{\Rd}( \uu,\psi_r \A{\bm \varphi}) dx= \int_{\Rd} \bigg( \uu,\A(\psi_r \bm \varphi)-\sum_{h,k=1}^d Q^{hk}D_k {\bm \varphi}D_h \psi_r\bigg) dx
\end{align*}
and taking \eqref{new} and the local boundedness of the coefficients of the operator $\A$ into account, we get
\begin{align*}
\bigg|\int_{\Rd}(\uu,\psi_r \A{\bm \varphi})dx\bigg |
& \le \bigg |\int_{\Rd}(\uu,\A(\psi_r \bm \varphi)) dx\bigg |+\bigg |\int_{\Rd} \bigg (\uu,\sum_{h,k=1}^d Q^{hk}D_k {\bm \varphi}D_h \psi_r\bigg ) dx\bigg |\\
& \le C_6\|\bm \varphi\|_{W^{1,p'}(\Rd;\Rm)}
\end{align*}
for some positive constant $C_6$, independent of $\bm\varphi$.

Now, let us consider a function $\eta\in C^\infty_c(\Rd)$ such that $\chi_{B(0,2r)}\le \eta \le \chi_{B(0,4r)}$ and set
\begin{equation}\label{tilde}\tilde {Q}^{hk}= \eta Q^{hk}+(1-\eta)\delta_{hk}I_m,\quad\;\,\quad \tilde{V}=\eta V
\end{equation}
for any $h,k=1, \ldots,d$, where $I_m$ denotes the $m\times m$ identity matrix. Clearly, the coefficients of the operator
\begin{eqnarray*}
\tilde{\A}=\sum_{h,k=1}^d D_h(\tilde{Q}^{hk}D_{k})
-\tilde{V}
\end{eqnarray*}
satisfy the assumptions in (i) and, since
\begin{eqnarray*}
\int_{\Rd}( \uu,\psi_r \A{\bm \varphi}) dx= \int_{\Rd}(\uu,\psi_r \tilde{\A}{\bm \varphi}) dx=\int_{\Rd}(\psi_r\uu, \tilde{\A}{\bm \varphi}) dx,
\end{eqnarray*}
applying property (i), we deduce that $\uu\psi_r \in W^{1,p}(\Rd;\Rm)$, that implies that $\uu \in W^{1,p}(B(0,r);\Rm)$. By the arbitrariness of $r>0$ we conclude that $\uu \in W^{1,p}_{\rm loc}(\Rd;\R^m)$.

(iii) Starting from \eqref{weak}, we obtain
\begin{align}\label{car}
\int_{\Rd}( \uu, \A_0 \bm \varphi) dx
= \int_{\Rd}( \f_1,\bm \varphi) dx
\end{align}
for any $\bm\varphi \in C^\infty_c(\Rd;\Rm)$,
where $\f_1=\f+V^T\uu$ belongs to $L^p(\Rd;\Rm)$.
Moreover, from the previous identity we get
\begin{equation}\label{cruc}
\int_{\Rd}(\uu, \lambda \bm \varphi-\A_0 \bm \varphi) dx= \int_{\Rd}(\lambda \uu-\f_1, \bm \varphi) dx=: \int_{\Rd}(\g, \bm \varphi) dx
\end{equation}
for any $\bm \varphi \in C^\infty_c(\Rd;\R^m)$ and, by density, for any $\bm \varphi \in W^{2,p'}(\Rd;\R^m)$.
Now, fix $\lambda>0$ in the resolvent sets of both $\A_0$ and of its adjoint $\A_0^*$, defined by
\begin{equation*}
\A_0^*\uu=\sum_{h,k=1}^d D_h((Q^{kh})^TD_{k}\uu)
\label{lead*}
\end{equation*}
(to which the results in \cite[Theorem 2.1]{miyazaki} can be applied). Then, to prove the claim, after observing that $\g \in L^p(\Rd;\Rm)$ we show that $\uu= (\lambda-\A_0^*)^{-1}\g$. In this case $\uu$ will belong to the domain of the realization of $\A_0^*$ in $L^p(\Rd;\Rm)$, which is $W^{2,p}(\Rd;\R^m)$, thanks again to \cite[Theorem 2.1]{miyazaki}. To this aim we set $\z= \uu-(\lambda-\A_0^*)^{-1}\g$ and observe that by \eqref{cruc} it holds that
\begin{equation*}
\int_{\Rd}( \z, \lambda  \bm \varphi-\A_0 \bm \varphi ) dx=0
\end{equation*}
for any  $\bm \varphi \in W^{2,p'}(\Rd;\R^m)$. The surjectivity of $\lambda I-\A_0$ as a map from $W^{2,p'}(\Rd;\Rm)$ into $L^{p'}(\Rd;\Rm)$ allows us to conclude that $\z \equiv \bm 0$.

(iv) Formula \eqref{weak} together with the assertion in (ii) yield immediately that $\uu \in W^{1,p}_{\rm loc}(\Rd;\Rm)$. Further, formula \eqref{car} continues to hold for any $\bm \varphi \in C^\infty_c(\Rd;\R^m)$ with $\f_1$ that now belongs to $L^p_{\rm loc}(\Rd;\Rm)$. We set $\vv=\psi_r\uu$, where $\psi_r$ is defined in the proof of claim (ii) and observe that formula \eqref{est} holds true also with $\A_0$ in place of $\A$.
Thus, using \eqref{car} and the integration by parts formula, we get
\begin{align*}
&\int_{\Rd}( \lambda \bm \varphi-\A_0 \bm \varphi, \vv) dx\\
=& \int_{\Rd} \lambda( \bm \varphi, \vv) - ( \psi_r\A_0 \bm \varphi, \uu) dx\\
=& \int_{\Rd} \lambda( \bm \varphi, \vv) - \Big( \A_0(\psi_r \bm \varphi)-\sum_{h,k=1}^d Q^{hk}D_k \bm \varphi D_h \psi_r, \uu\Big) dx\\
=& \int_{\Rd} (\bm \varphi, \lambda\vv - \psi_r \f_1 ) dx+\int_{\Rd}\bigg(\sum_{h,k=1}^d  Q^{hk}D_k {\bm \varphi}D_h \psi_r, \uu\bigg) dx\\
=& \int_{\Rd} ( \bm \varphi, \lambda\vv - \psi_r \f_1 ) dx-\int_{\Rd}\bigg( {\bm \varphi}, \sum_{h,k=1}^d D_k( (Q^{hk})^T\uu D_h \psi_r)\bigg) dx\\
=&\!: \int_{\Rd}(\bm \varphi, \lambda\vv - \psi_r \f_1 +\f_2) dx,
\end{align*}
whence, since all the functions under the integral sign are supported on $B(0,2r)$,
\begin{eqnarray*}
\int_{\Rd}(\vv, \A_1 \bm \varphi) dx=\int_{\Rd} (\bm \varphi, \psi_r \f_1 -\f_2) dx,
\end{eqnarray*}
where ${\A}_1= \sum_{h,k=1}^d D_h(\tilde{Q}^{hk}D_{k})$ and $\tilde{Q}^{hk}$ are the matrix-valued functions defined in \eqref{tilde}.
Note that the function $ \psi_r \f_1 -\f_2$ belongs to $L^p(\Rd;\R^m)$, thus, using property (iii), we conclude that $\vv$ belongs to $W^{2,p}(\Rd;\R^m)$ or, equivalently, that $\uu \in W^{2,p}(B(0,r);\R^m)$. The arbitrariness of $r>0$ shows that $\uu\in W^{2,p}_{\rm loc}(\Rd;\R^m)$, and we are done.
\end{proof}

\section{Assumptions and main results}
\label{sect-3}
In this section we state the main assumptions on the coefficients of the operators $\A$ defined in \eqref{operator}. 
\begin{hyp}\label{hyp_0}
\renewcommand{\labelitemi}{\normalfont -}
\begin{enumerate}[\rm(i)]
\item 
For every $h,k=1,\ldots,d$, the matrix-valued function $Q^{hk}=q_{hk}I+A^{hk}$ satisfies the following conditions:
\begin{itemize}
\item[$\diamond$]
$q_{hk}\in C^1(\Rd)$ and the matrix-valued function $Q=(q_{hk})$ satisfies the condition  
${\rm Re}\,(Q(x)\xi, \xi) >0$ for every $x\in\Rd$ and $\xi\in \mathbb C^d\setminus\{0\}$. Moreover, there exists a positive constant $c_0$ such that
\begin{equation}\label{Im_Q}
|({\rm Im}(Q(x)\xi,\xi)|\le c_0{\rm Re}(Q(x)\xi,\xi),\qquad\;\, x\in \Rd, \xi\in \mathbb C^d;
\end{equation}
\item[$\diamond$]
the matrix-valued functions $A^{hk}=(a^{hk}_{ij})$ have entries $a^{hk}_{ij}\in C^1(\Rd)$, for every $i,j=1,\ldots,m$ and $h,k=1,\ldots,d$, and satisfy the following condition:
\begin{align}\label{realLegendrefunc}
0\leq  {\rm Re}\sum_{h,k=1}^d (A^{hk}(x)\theta^k, \theta^h) \leq {\mathscr C}\,{\rm Re} \sum_{i=1}^m \sum_{h,k=1}^dq_{hk}(x)\theta^k_i\overline{\theta^h_i}
\end{align}
 for some constant ${\mathscr C}>0$, every $\theta^1,\ldots,\theta^d\in\Cm$ and $x\in\Rd$;
 \end{itemize}
\item
for every $\theta^1,\ldots,\theta^d\in \Cm$ and $x\in\Rd$ it holds that
\begin{align}
\bigg | {\rm Im}\sum_{h,k=1}^d (A^{hk}(x)\theta^k, \theta^h)\bigg |\leq {\mathscr C}\, {\rm Re}\sum_{i=1}^m \sum_{h,k=1}^dq_{hk}(x)\theta^k_i\overline{\theta^h_i},
\label{comLeg}
\end{align}
where ${\mathscr C}$ is the constant in \eqref{realLegendrefunc};
\item
$v_{ij}\in L^\infty_{\rm{loc}}(\Rd)$ for every $i,j=1, \ldots, m$ and
${\rm Re}(V(x) \xi, \xi)\geq 0$ for every $x\in \Rd$ and $\xi \in \Cm$.
\end{enumerate}
\end{hyp}

\begin{rmk}{\rm 
 Note that Hypothesis \ref{hyp_0}(i) implies the Legendre-Hadamard condition, i.e.,
 \begin{equation*}
 \sum_{h,k=1}^d\sum_{i,j=1}^m Q^{hk}_{ij}\xi_i\xi_j\eta_h\eta_k\ge 0, \qquad\;\, \xi \in \R^m,\;\, \eta \in \R^d,
\end{equation*}
which is usually assumed in the classical theory of strongly coupled systems of elliptic equations with bounded coefficients. Moreover, assuming further Hypothesis \ref{hyp_0}(ii), Proposition \ref{CSconf} yields the Cauchy-Schwarz inequalities
\begin{equation}\label{C-S_0}
|(Q(x) \xi,\zeta)|\le (1+c_0)\left ({\rm Re} (Q(x)\xi,\xi\right))^{\frac{1}{2}}\left ({\rm Re}(Q(x)\zeta,\zeta)\right )^{\frac{1}{2}}
\end{equation}
and
\begin{equation}\label{C-S}
\bigg |\sum_{h,k=1}^d(A^{hk}(x)\vartheta^k,\eta^h)\bigg |\le 2{\mathscr C} \bigg ({\rm Re}\sum_{i=1}^m\sum_{h,k=1}^d q_{hk}(x)\vartheta^k_i\overline{\vartheta^h_i}\bigg )^{\frac{1}{2}}\bigg ({\rm Re}\sum_{i=1}^m\sum_{h,k=1}^d q_{hk}(x)\eta^k_i\overline{\eta^h_i}\bigg )^{\frac{1}{2}}
\end{equation}
for every $\xi,\zeta \in \mathbb{C}^d$, $\vartheta^1,\ldots\vartheta^d,\eta^1,\ldots,\eta^d\in \Cm$ and $x\in\Rd$. }
\end{rmk}

In what follows, we will use the following formulas, which hold true for every smooth function $\uu\in C_c^\infty (\Rd, \Cm)$:
\begin{align}\label{Dmod}
&D_h |\uu|^2 = 2{\rm Re}\sum_{i=1}^m u_iD_h\overline{ u_i}=2{\rm Re}(\uu, D_h\uu),\\
&\label{disDmod}(Q\nabla|\uu|^2, \nabla|\uu|^2)={\rm Re}(Q\nabla|\uu|^2, \nabla|\uu|^2)\leq 4 |\uu|^2 \sum_{i=1}^m  {\rm Re}(Q\nabla u_i, \nabla u_i).
\end{align}

\begin{lemm}\label{lp_diss}
Let us assume that Hypotheses $\ref{hyp_0}$ are satisfied. If $p \in J$, where
\begin{equation*}
J:=\left\{\begin{array}{ll}
\displaystyle{\left[2-\frac{1}{2{\mathscr C}+1}, 2+\frac{1}{{\mathscr C}^2}\right]}, \qquad\;\, & {\mathscr C}\in (0,1),\\
[4mm]
\displaystyle{\left[2-\frac{1}{2{\mathscr C}+1}, 2+\frac{1}{2{\mathscr C}-1}\right]}, \qquad\;\, &{\mathscr C} \in [1,+\infty),
\end{array}
\right.
\end{equation*}
then $(\A, C_c^{\infty}(\Rd;\Rm))$ is $L^p$-dissipative, i.e., for every $\uu\in C^\infty_c(\Rd;\mathbb{C}^m)$ 
it holds that
\begin{equation}
{\rm Re}\int_{\Rd}(\A\uu,\uu)|\uu|^{p-2}dx\le 0.
\label{luca_0}
\end{equation}

Moreover, if $p$ belongs to 
\begin{equation}
\widetilde J:=\left\{\begin{array}{ll}
\displaystyle{\left[2-\frac{1}{2{\mathscr C}+1}, 2+\frac{1}{{\mathscr C}^2}\right )}, \qquad\;\, & {\mathscr C}\in (0,1),\\
[4mm]
\displaystyle{\left[2-\frac{1}{2{\mathscr C}+1}, 2+\frac{1}{2{\mathscr C}-1}\right )}, \qquad\;\, &{\mathscr C} \in [1,+\infty),
\end{array}
\right.
\label{luca_0000}
\end{equation}
then there exists a positive constant $\delta$ such that
\begin{equation}
{\rm Re}\int_{\Rd}(\A\uu,\uu)|\uu|_\varepsilon^{p-2}dx\le -\delta\sum_{i=1}^m\int_{\Rd}{\rm Re}(Q\nabla u_i,\nabla u_i) |\uu|_{\varepsilon}^{p-2}dx
\label{luca_00}
\end{equation}
for every $\uu\in C^{\infty}_c(\Rd;\Cm)$ and $\varepsilon>0$.
\end{lemm}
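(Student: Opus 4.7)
The plan is to test $\A\uu$ against the function $\uu|\uu|_\varepsilon^{p-2}$ and integrate by parts. For $\uu\in C_c^\infty(\Rd;\mathbb{C}^m)$, the pointwise identity $D_h(\uu|\uu|_\varepsilon^{p-2})=|\uu|_\varepsilon^{p-2}D_h\uu+(p-2)|\uu|_\varepsilon^{p-4}\alpha_h\uu$, where $\alpha_h:={\rm Re}(\uu,D_h\uu)=\tfrac12 D_h|\uu|^2$, combined with integration by parts, yields
\begin{equation*}
-{\rm Re}\int_{\Rd}(\A_0\uu,\uu)|\uu|_\varepsilon^{p-2}dx=\int_{\Rd}|\uu|_\varepsilon^{p-2}E_{\uu}\,dx+(p-2)\int_{\Rd}|\uu|_\varepsilon^{p-4}F_{\uu}\,dx,
\end{equation*}
where $E_{\uu}:={\rm Re}\sum_{h,k}(Q^{hk}D_k\uu,D_h\uu)$ collects the diagonal contribution and $F_{\uu}:=\sum_{h,k}\alpha_h\,{\rm Re}(Q^{hk}D_k\uu,\uu)$ collects the cross contribution coming from differentiating $|\uu|_\varepsilon^{p-2}$. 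Hypothesis~\ref{hyp_0}(iii) makes the potential term contribute $-{\rm Re}(V\uu,\uu)|\uu|_\varepsilon^{p-2}\le 0$, so it suffices to analyse the identity above with $\A_0$ in place of $\A$.

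Decomposing $Q^{hk}=q_{hk}I+A^{hk}$ and invoking Hypothesis~\ref{hyp_0}(i), the bound \eqref{realLegendrefunc} gives $E_{\uu}\ge\sigma^2:=\sum_i{\rm Re}(Q\nabla u_i,\nabla u_i)$, since the $A^{hk}$-contribution to $E_{\uu}$ is nonnegative. The crucial step is the sharp estimate of $F_{\uu}$. I will split $F_{\uu}=F_{\uu}^{(1)}+F_{\uu}^{(2)}$ along the decomposition of $Q^{hk}$ and apply \eqref{C-S} to $F_{\uu}^{(2)}={\rm Re}\sum_{h,k}(A^{hk}D_k\uu,\alpha_h\uu)$ with the choice $\vartheta^k=D_k\uu$ and $\eta^h=\alpha_h\uu$; combined with the Kato-type inequality \eqref{disDmod} in the form ${\rm Re}(Q\alpha,\alpha)\le|\uu|^2\sigma^2$ (with $\alpha:=(\alpha_1,\ldots,\alpha_d)$), this produces $|F_{\uu}^{(2)}|\le 2{\mathscr C}|\uu|^2\sigma^2$. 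The scalar-matrix part $F_{\uu}^{(1)}={\rm Re}(Qa,\alpha)$, where $a_k:=(D_k\uu,\uu)$, is controlled analogously by \eqref{C-S_0} applied to $Q$ on $\mathbb{C}^d$ together with \eqref{disDmod}.

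Combining the estimates and using $|\uu|^2\le|\uu|_\varepsilon^2$ to absorb $|\uu|_\varepsilon^{p-4}|\uu|^2\le|\uu|_\varepsilon^{p-2}$, I will arrive at a bound of the form
\begin{equation*}
-{\rm Re}\int_{\Rd}(\A_0\uu,\uu)|\uu|_\varepsilon^{p-2}dx\ge\bigl(1-|p-2|K(p,{\mathscr C})\bigr)\int_{\Rd}|\uu|_\varepsilon^{p-2}\sigma^2\,dx,
\end{equation*}
where the effective constant $K(p,{\mathscr C})$ depends on the sign of $p-2$. The inequality $|p-2|K\le 1$ cuts out precisely the interval $J$, proving \eqref{luca_0} after adding back the nonpositive $V$-term. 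The hardest technical point is producing the asymmetric endpoints $2-1/(2{\mathscr C}+1)$ and, depending on the size of ${\mathscr C}$, $2+1/(2{\mathscr C}-1)$ or $2+1/{\mathscr C}^2$: the directions $p<2$ and $p>2$ must be treated separately, keeping track of signs in the estimate of $F_{\uu}^{(1)}$, and, in the direction $p>2$, absorbing part of the nonnegative $A^{hk}$-contribution to $E_{\uu}$ against $F_{\uu}^{(2)}$ instead of using the crude bound $E_{\uu}\ge\sigma^2$. For the strict inequality \eqref{luca_00}, restricting to $p\in\tilde J$ gives $1-|p-2|K>0$ strictly, so any $\delta\in(0,1-|p-2|K]$ delivers the claim.
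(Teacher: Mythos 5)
Your opening moves coincide with the paper's: test against $\uu|\uu|_\varepsilon^{p-2}$, integrate by parts, drop the $V$-term and the nonnegative $A^{hk}$-contribution to $E_{\uu}$, and attack the cross term through the decomposition $Q^{hk}=q_{hk}I+A^{hk}$, \eqref{C-S} and \eqref{disDmod}. The problem is in how you estimate the cross terms, and it costs you the sharp interval $J$.

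First, $F_{\uu}^{(1)}$ should not be ``controlled'' by \eqref{C-S_0}: since ${\rm Re}(D_k\uu,\uu)=\alpha_k$, you have the exact identity $F_{\uu}^{(1)}=(Q\alpha,\alpha)=\tfrac14(Q\nabla|\uu|^2,\nabla|\uu|^2)\ge 0$, whose sign (not merely size) is what matters. Second, and more importantly, collapsing Cauchy--Schwarz and Kato into the pointwise bound $|F_{\uu}^{(2)}|\le 2{\mathscr C}\sigma^2|\uu|^2$ discards the Young parameter, and with it the ability to trade off against $F_{\uu}^{(1)}$. If you run your estimate to the end you get, for $p>2$, the constraint $p\le 2+\tfrac{1}{2{\mathscr C}}$, which is strictly weaker than both endpoints in $J$ (namely $2+\tfrac{1}{{\mathscr C}^2}$ when ${\mathscr C}<1$ and $2+\tfrac{1}{2{\mathscr C}-1}$ when ${\mathscr C}\ge 1$). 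Your proposed repair, absorbing the $A^{hk}$-part of $E_{\uu}$ against $F_{\uu}^{(2)}$, is aimed at the wrong term: the paper drops $E_{\uu}^{(2)}$ entirely, and the extra slack comes from $F_{\uu}^{(1)}$.

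Concretely, the paper's route (and the one you need) is to keep a free Young parameter $\tau>0$ in the bound
\begin{equation*}
|F_{\uu}^{(2)}|\le 2{\mathscr C}\,\sigma\,|\uu|\,(Q\alpha,\alpha)^{1/2}\le {\mathscr C}\tau\,\sigma^2|\uu|^2+\frac{{\mathscr C}}{\tau}(Q\alpha,\alpha),
\end{equation*}
add the exact $(p-2)F_{\uu}^{(1)}=(p-2)(Q\alpha,\alpha)$ to the $(Q\alpha,\alpha)$-part, and only then use $0\le(Q\alpha,\alpha)\le|\uu|^2\sigma^2$ to close. Optimizing over $\tau$ under these two-sided constraints produces the discriminant dichotomy and the asymmetric endpoints (the quadratic $(1-\delta)t^2-|p-2|{\mathscr C}\,t+\tfrac{p-2}{4}\ge 0$ on $t\ge\tfrac12$, where $t^2$ is the ratio of the two integrals); for ${\mathscr C}<1$ the binding regime is $\tau={\mathscr C}$ (giving $2+\tfrac{1}{{\mathscr C}^2}$), for ${\mathscr C}\ge 1$ it is $\tau=1$ (giving $2+\tfrac{1}{2{\mathscr C}-1}$). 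For $p<2$ your crude bound already gives the sharp lower endpoint $2-\tfrac{1}{2{\mathscr C}+1}$, so the fix is needed only on the $p>2$ side.
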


\begin{proof}
Fix $\uu\in C^\infty_c(\Rd;\mathbb{C}^m)$. 
A straightforward computation shows that
\begin{align}
&\int_{\Rd}(\A\uu,\uu|\uu|_\varepsilon^{p-2}) dx\notag\\
=& -\int_{\Rd}\sum_{h,k=1}^d (Q^{hk}D_k \uu, D_h\uu) |\uu|_\varepsilon^{p-2}dx\notag\\
&- \frac{p-2}{2} \int_{\Rd}\sum_{h,k=1}^d (Q^{hk}D_k \uu, \uu)|\uu|_\varepsilon^{p-4} D_h|\uu|^2 dx-\int_{\Rd}(V\uu, \uu) |\uu|_\varepsilon^{p-2}dx\notag\\
=& - \int_{\Rd}\sum_{i=1}^m(Q \nabla u_i, \nabla u_i)|\uu|_\varepsilon^{p-2}dx-\int_{\Rd}\sum_{h,k=1}^d (A^{hk}D_k \uu, D_h\uu) |\uu|_\varepsilon^{p-2}dx\notag\\
&- \frac{p-2}{2}\int_{\Rd}\sum_{i=1}^m(Q\nabla u_i,\nabla|\uu|^2)\overline{u_i}|\uu|_\varepsilon^{p-4}dx
-\int_{\Rd}(V\uu,\uu)
|\uu|_\varepsilon^{p-2}dx\notag\\
&- \frac{p-2}{2} \int_{\Rd}\sum_{h,k=1}^d (A^{hk}D_k \uu, \uu)|\uu|_\varepsilon^{p-4}D_h|\uu|^2 dx.
\label{formula}
\end{align}
Hence, using formula \eqref{Dmod} and Hypothesis \ref{hyp_0}(i) and (iii), we can estimate
\begin{align}
&{\rm Re}\int_{\Rd}(\A\uu,\uu|\uu|_\varepsilon^{p-2})dx\notag\\
\le & - \int_{\Rd}{\rm Re} \sum_{i=1}^m (Q\nabla u_i, \nabla u_i)|\uu|_{\varepsilon}^{p-2} dx - \frac{p-2}{4}\int_{\Rd} (Q\nabla|\uu|^2,\nabla  |\uu|^2)  |\uu|_\varepsilon^{p-4}dx\notag\\
 & - \frac{p-2}{2} \int_{\Rd}{\rm Re}\sum_{h,k=1}^d (A^{hk}D_k \uu, \uu )|\uu|_\varepsilon^{p-4} D_h|\uu|^2dx.
 \label{form-1}
\end{align}

By \eqref{C-S} and Young inequality, we get 
\begin{align}
&\bigg|\int_{\Rd}{\rm Re}\sum_{h,k=1}^d (A^{hk}D_k \uu, \uu) |\uu|_\varepsilon^{p-4}D_h|\uu|^2dx\bigg |\notag\\
=& \bigg |\int_{\Rd}{\rm Re}\sum_{h,k=1}^d(A^{hk}(D_k \uu) |\uu|_\varepsilon^{\frac{p-2}{2}} , (|\uu|_\varepsilon^{\frac{p-6}{2}}D_h|\uu|^2)\uu )dx  \bigg |\notag\\
\le & 2{\mathscr C}\bigg( \int_{\Rd}{\rm Re}\sum_{i=1}^m(Q\nabla u_i,\nabla u_i)|\uu|_\varepsilon^{p-2} dx\bigg)^{\frac 1 2}
\bigg( \int_{\Rd}(Q\nabla |\uu|^2,\nabla|\uu|^2)|\uu|^2 |\uu|_\varepsilon^{p-6} dx\bigg)^{\frac 1 2}\notag\\
\le & 2{\mathscr C}\sigma  \int_{\Rd}{\rm Re} \sum_{i=1}^m (Q\nabla u_i, \nabla u_i)|\uu|_{\varepsilon}^{p-2} dx + \frac{{\mathscr C}}{2\sigma} \int_{\Rd} (Q\nabla|\uu|^2,\nabla  |\uu|^2)  |\uu|_\varepsilon^{p-4}dx
\label{form-2}
\end{align}
for every $\sigma>0$. From \eqref{form-1} and \eqref{form-2}, it follows that
\begin{align}
&{\rm Re}\int_{\Rd}(\A\uu,\uu)|\uu|_\varepsilon^{p-2}dx\notag\\ 
\leq &(- 1+|p-2|{\mathscr C}\sigma)\int_{\Rd}\sum_{i=1}^m{\rm Re}(Q\nabla u_i, \nabla u_i)|\uu|_\varepsilon^{p-2} dx\notag\\
&+\bigg (\frac{|p-2|{\mathscr C} }{4\sigma}-\frac{p-2}{4} \bigg ) \int_{\Rd}(Q\nabla|\uu|^2,\nabla|\uu|^2)|\uu|_\varepsilon^{p-4} dx=: g_p(\sigma).
\label{form-3}
\end{align}
Now, let us set  $$A:=\sum_{i=1}^m\int_{\Rd}{\rm Re}(Q\nabla u_i, \nabla u_i)|\uu|_\varepsilon^{p-2} dx, \qquad\,\,B:=\int_{\Rd}(Q\nabla|\uu|^2,\nabla|\uu|^2)|\uu|_\varepsilon^{p-4} dx$$
and observe that $B\le 4A$, thanks to estimate \eqref{disDmod}. Therefore, if $A=0$ then $B=0$ as well.
In this case, the right hand-side in \eqref{form-3} is identically zero for any $\sigma>0$. In the non trivial case, i.e., $A\neq 0$, then
\begin{align*}
\min\{ g_p(\sigma): \sigma>0\}= g_p\left(\sqrt{\frac{B}{4A}}\right)
= -A -\frac{p-2}{4}B+|p-2|{\mathscr C}\sqrt{AB}.
\end{align*}

Now, we look for the values of $p$ such that
$g_{p}\left(\sqrt{\frac{B}{4A}}\right)\leq -\delta A$ for any $A,B>0$, such that $B\leq 4 A$, and some constant $\delta\in [0,1)$. Clearly, if $B=0$, then $\min g_p\leq -A$. Otherwise, if $B>0$, then  
setting $t=\sqrt{\frac A B}$  we need to study the inequality
\begin{equation}
\label{dis}
(1-\delta)t^2 - |p-2| {\mathscr C} t + \frac{p-2}{4}\geq 0, \qquad \;\,t\geq  \frac{1}{2}.
\end{equation}
If ${\mathscr C}^2(p-2)^2 - (1-\delta)(p-2)\leq 0$, i.e., $0\leq p-2 \leq (1-\delta){\mathscr C}^{-2}$, then \eqref{dis} is satisfied for any $t\in\R$.
\noindent
If  ${\mathscr C}^2(p-2)^2 - (1-\delta)(p-2)> 0$, then \eqref{dis} is satisfied for any $t\geq \frac 1 2$ if and only if
\begin{equation*}
|p-2|{\mathscr C}+ \sqrt{{\mathscr C}^2(p-2)^2-(1-\delta)(p-2)}\leq 1
\end{equation*}
or, equivalently,
\begin{equation}\label{b3}
\left\{
\begin{array}{ll}
x({\mathscr C}^2x-(1-\delta))\ge 0,\\[1mm]
1-|x|{\mathscr C}\ge 0,\\[1mm]
2|x|{\mathscr C}-(1-\delta)x \le 1,
\end{array}
\right.
\end{equation}
where we set $x=p-2$.
Since the inequality $2|x|{\mathscr C}-(1-\delta)x \le 1$ is satisfied for $x \ge -(2{\mathscr C}+1-\delta)^{-1}$, if ${\mathscr C} \le \frac{1}{2}-\frac{\delta}{2}$, and for $-(2{\mathscr C}+1-\delta)^{-1}\le x\le (2{\mathscr C}-1+\delta)^{-1}$ otherwise, we conclude that
\eqref{b3} is satisfied if and only if
$x \in  \left[-\frac{1}{2{\mathscr C}+1-\delta}, 0\right]$ if ${\mathscr C} \in (0,1-\delta)$ and $x \in \left[-\frac{1}{2{\mathscr C}+1-\delta},0\right ]\cup\left [\frac{1-\delta}{{\mathscr C}^2},\frac{1}{2{\mathscr C}-1+\delta}\right]$ if ${\mathscr C} \ge 1-\delta$.

Adding also the first case and writing the latter conditions in terms of $p$, we conclude that $\min g_p\le-\delta A$  if and only if $p \in J_{\delta}$, where
\begin{equation*}
J_{\delta}=\left\{\begin{array}{ll}
\displaystyle{\left[2-\frac{1}{2{\mathscr C}+1-\delta}, 2+\frac{1-\delta}{{\mathscr C}^2}\right]}, \qquad\;\, & {\mathscr C}\in (0,1-\delta),\\
[4mm]
\displaystyle{\left[2-\frac{1}{2{\mathscr C}+1-\delta}, 2+\frac{1}{2{\mathscr C}-1+\delta}\right]}, \qquad\;\, &{\mathscr C} \in [1-\delta,+\infty).
\end{array}
\right.
\end{equation*}
From the previous computations, it follows that estimate \eqref{luca_0} is satisfied if $p\in J$. On the other hand, if $p$ belongs to $\widetilde J$, then condition \eqref{luca_00} is satisfied for some $\delta>0$.
Indeed, suppose that $\mathscr C\in (0,1)$. Then,
${\mathscr C}\in (0,1-\delta)$ for every $\delta\in (0,1-{\mathscr C})$. The above results show that \eqref{luca_00}
holds true for every 
$p\in\displaystyle\left [2-\frac{1}{2{\mathscr C}+1-\delta}, 2+\frac{1-\delta}{{\mathscr C}^2}\right]$. If
$p\in\displaystyle\left [2-\frac{1}{2{\mathscr C}+1}, 2+\frac{1}{{\mathscr C}^2}\right)$, then we can determine $\delta\in (0,1-{\mathscr C})$ such that $p\in J_{\delta}$ and, consequently,
\eqref{luca_00} follows with this $\delta$. On the other hand, if ${\mathscr C}\ge 1$, then, ${\mathscr C}>1-\delta$ for every $\delta>0$ and \eqref{luca_00} is satisfied for every $p\in J_{\delta}$. If $p\in \displaystyle\left [2-\frac{1}{2{\mathscr C}+1}, 2+\frac{1}{2{\mathscr C}-1}\right)$, then, we can find $\delta>0$ such that
$p\in J_{\delta}$ and
\eqref{luca_00} follows with this $\delta$. The proof is complete.
\end{proof}

Now we prove that the realization of operator $\A$ in $L^p(\Rd;\R^m)$ with domain $D_{p,{\rm max}}(\A)=\{\uu\in L^p(\Rd;\Rm)\cap W^{2,p}_{\rm loc}(\Rd;\Rm):\A\uu\in L^p(\Rd;\Rm)\}$ generates a strongly continuous semigroup of contractions in $L^p(\Rd;\Rm)$.

\begin{thm}\label{mainth}
Assume that Hypotheses $\ref{hyp_0}$ are satisfied and that there exist a positive function $\psi \in C^1(\Rd)$, blowing up at $\infty$, and $K>0$ such that
\begin{eqnarray}\label{ipo_core}
\frac{(Q\nabla \psi, \nabla\psi)}{(\psi\log \psi)^2} \leq K.
\end{eqnarray}
Then, for any $p \in (1,\infty)$, satisfying the condition
\begin{eqnarray}\label{condition_p}
\left\vert \frac 1 p -\frac 1 2\right|\leq\frac{1}{2(4{\mathscr C}+1)},
\end{eqnarray}
the realization ${\bf A}_p$ of the operator $\A$ in $L^p(\Rd;\Rm)$, with domain $D_{p,\rm{max}}(\A)$, generates a strongly continuous semigroup of contraction in $L^p(\Rd;\Rm)$. Moreover, the space $C_c^\infty(\Rd;\Rm)$ is a core of $({\bf A}_p,D_{p,\rm{max}}(\A))$.
\end{thm}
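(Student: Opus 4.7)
The proof will apply the Lumer-Phillips theorem to $(\mathbf{A}_p, D_{p,{\rm max}}(\A))$, which is densely defined since it contains $C_c^\infty(\Rd;\Rm)$. First, I would verify that condition \eqref{condition_p} places $p$ strictly inside the interval $\widetilde J$ appearing in Lemma \ref{lp_diss}: rewriting $|1/p-1/2|\le (2(4{\mathscr C}+1))^{-1}$ as $-(2{\mathscr C}+1)^{-1}\le p-2\le (2{\mathscr C})^{-1}$ and comparing with the endpoints of $\widetilde J$ gives strict inclusion, so \eqref{luca_00} holds with some $\delta>0$. This yields the $L^p$-dissipativity of $(\A, C_c^\infty(\Rd;\Rm))$, and also extends to $D_{p,{\rm max}}(\A)$ via the cut-off procedure described below.

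The main task is the range condition. For a fixed $\f\in L^p(\Rd;\Rm)$ and each $n\in\N$, I would construct an auxiliary operator $\tilde{\A}_n$ by freezing the coefficients outside $B(0,n)$ using the gluing trick \eqref{tilde} from the proof of Proposition \ref{reg_thm}: the resulting coefficients are bounded on $\Rd$ and still satisfy Hypotheses \ref{hyp_0} with the same constant ${\mathscr C}$. By \cite{miyazaki}, for $\lambda$ large enough the equation $\lambda \uu_n-\tilde{\A}_n\uu_n=\f$ admits a unique solution $\uu_n\in W^{2,p}(\Rd;\Rm)$. Applying the strict dissipativity estimate \eqref{luca_00} to $\uu_n$ (which is licit since $\uu_n\in W^{2,p}$) yields $\|\uu_n\|_p\le \lambda^{-1}\|\f\|_p$ together with a uniform bound on the weighted gradient term. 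The local estimates from Proposition \ref{reg_thm}(iv) give uniform $W^{2,p}(B(0,r);\Rm)$-bounds for every $r>0$, so a subsequence converges weakly in $L^p(\Rd;\Rm)$ and strongly in $W^{1,p}_{\rm loc}(\Rd;\Rm)$ to some $\uu$. Passing to the limit in $\int(\uu_n,\lambda\bm\varphi-\tilde{\A}_n^*\bm\varphi)\,dx=\int(\f,\bm\varphi)\,dx$ for $\bm\varphi\in C^\infty_c(\Rd;\Rm)$, which is legal as soon as $n$ is so large that $\tilde{\A}_n^*\bm\varphi=\A^*\bm\varphi$ on ${\rm supp}\,\bm\varphi$, I would identify $\uu$ as a distributional solution of $\lambda\uu-\A\uu=\f$; Proposition \ref{reg_thm}(iv) then gives $\uu\in W^{2,p}_{\rm loc}(\Rd;\Rm)$, hence $\uu\in D_{p,{\rm max}}(\A)$.

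For the core property and for the extension of dissipativity, the plan is to approximate any $\uu\in D_{p,{\rm max}}(\A)$ in the graph norm by elements of $C_c^\infty(\Rd;\Rm)$. Setting $\chi_n(x)=\phi(n^{-1}\log(1+\log(1+\psi(x))))$ with $\phi\in C_c^\infty(\R)$ equal to $1$ near $0$, we have $\chi_n\to 1$ pointwise and $\{\chi_n\ne 0\}$ is compact, since $\psi$ blows up at infinity. Writing the commutator $\A(\chi_n\uu)-\chi_n\A\uu$ in weak form, one encounters terms of the type $(Q^{hk}\uu,D_h\bm\varphi)D_k\chi_n$ and $(Q^{hk}D_k\uu,\bm\varphi)D_h\chi_n$; by the Cauchy-Schwarz inequality \eqref{C-S_0} these are controlled by $(Q\nabla\chi_n,\nabla\chi_n)^{1/2}$, and the chain rule combined with \eqref{ipo_core} gives the pointwise bound $(Q\nabla\chi_n,\nabla\chi_n)^{1/2}\le C\,n^{-1}|\phi'|\sqrt{K}$. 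This yields $\A(\chi_n\uu)\to\A\uu$ in $L^p$, while $\chi_n\uu\to\uu$ by dominated convergence; a standard mollification argument then produces the desired $C_c^\infty$ approximation in the graph norm. The chief obstacle will be precisely this last estimate: one has to manage the commutator in the absence of any growth bound on $Q$ itself, and the Lyapunov condition \eqref{ipo_core} together with the sesquilinear Cauchy-Schwarz inequality \eqref{C-S_0} is exactly what makes the argument go through.
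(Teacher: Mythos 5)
Your strategy diverges from the paper's in two essential ways, and while the first divergence is a legitimate alternative, the second contains a genuine gap.

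The paper proves the range condition for $\lambda I-\A$ by a \emph{duality} argument: it takes any $\uu\in L^{p'}$ annihilating $(\lambda I-\A)(C_c^\infty)$, uses Proposition \ref{reg_thm}(iv) to upgrade its regularity, tests the adjoint equation against $\zeta_n^2\uu|\uu|_\varepsilon^{p'-2}$ (with $\zeta_n$ built from $\psi$), absorbs the $Q$-weighted gradient terms by the strict dissipativity estimate \eqref{luca_00} applied in $L^{p'}$, and lets $n\to\infty$ via \eqref{ipo_core}. This is precisely why \eqref{condition_p} is chosen so that \emph{both} $p$ and $p'$ lie in $\widetilde J$ (cf.\ Remark \ref{esami}(iii)), a point your argument never invokes. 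The surjectivity on the closure of $(\A,C_c^\infty)$ is then transferred to $D_{p,\max}(\A)$ by the abstract Step 2, which in particular yields $D=D_{p,\max}(\A)$ and hence the core property with no commutator estimate whatsoever. Your replacement for the range step — solving $\lambda\uu_n-\tilde\A_n\uu_n=\f$ with coefficients frozen via \eqref{tilde}, extracting a subsequence by local $W^{2,p}$ bounds and Rellich, and passing to the limit in the weak formulation — is a genuinely different (extrapolation-from-bounded-coefficients) route, and with the details filled in it is plausible; note, though, that Proposition \ref{reg_thm}(iv) as stated is a qualitative regularity result, and the uniformity of the local $W^{2,p}(B(0,r))$ bounds would have to be extracted from its proof.

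The real problem is your treatment of the core property (and of the extension of dissipativity to $D_{p,\max}(\A)$, which you need in order to apply Lumer–Phillips directly on the maximal domain). You claim $\A(\chi_n\uu)\to\A\uu$ in $L^p$ by estimating the commutator with \eqref{C-S_0}. But \eqref{C-S_0} applied to a term such as $\sum_{h,k}q_{hk}D_k\chi_n\,D_hu_i$ gives at best the pointwise bound
\begin{align*}
\big|(Q\nabla\chi_n,\nabla u_i)\big|\le (1+c_0)\,(Q\nabla\chi_n,\nabla\chi_n)^{1/2}\,\big({\rm Re}(Q\nabla u_i,\nabla u_i)\big)^{1/2},
\end{align*}
so the commutator is controlled in $L^p$ only if $\big({\rm Re}(Q\nabla u_i,\nabla u_i)\big)^{1/2}\in L^p$. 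That quantity is \emph{not} available: the dissipativity estimate \eqref{luca_00} produces only the weighted integral $\sum_i\int{\rm Re}(Q\nabla u_i,\nabla u_i)|\uu|_\varepsilon^{p-2}\,dx<\infty$, which is a different object and does not dominate the $L^p$ norm of the $Q$-weighted gradient. (The weak formulation does not rescue this: to show the commutator is small in $L^p$ you must pair it against arbitrary $L^{p'}$ test functions, but the weak form involves $\nabla\bm\varphi$, and pairing $(Q\nabla\chi_n,\nabla\varphi_i)u_i$ leaves you with $\big({\rm Re}(Q\nabla\varphi_i,\nabla\varphi_i)\big)^{1/2}$, again uncontrolled because $Q$ is unbounded.) Moreover the pointwise commutator also contains the terms $\sum_{h,k}D_hQ^{hk}\,D_k\chi_n\,\uu$ and $\sum_{h,k}Q^{hk}D_{hk}\chi_n\,\uu$, for which no bound at all is available, and the second of which additionally requires $\psi\in C^2$, while the hypothesis gives only $\psi\in C^1$. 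This is precisely the obstruction that forces the paper into the duality route: there the $Q$-weighted gradients are absorbed by the sign-definite dissipativity form rather than estimated in $L^p$. So the cut-off-and-mollify path to the core property, as you describe it, does not close.
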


\begin{proof}
Due to its length we split the proof into two steps.

\emph{Step 1}. Here, we prove that $(\A,C_c^{\infty}(\Rd;\Rm))$ is a closable operator in $L^p(\Rd;\Rm)$ and its closure generates a strongly continuous semigroup.

To this aim, first note that, for any ${\mathscr C}>0$, the set of $p$'s satisfying condition \eqref{condition_p} is a subset of the set $\widetilde J$ introduced in Lemma \ref{lp_diss}. Then, $(\A, C_c^{\infty}(\Rd;\Rm))$ is $L^p$-dissipative and the assertion will follow from the Lumer-Phillips theorem  (see e.g., \cite[Theorem 3.15]{engnagel}) if we prove that $(\lambda I- \A)(C_c^\infty(\Rd;\Rm))$ is dense in $L^p(\Rd;\Rm)$ for some  $\lambda >0$. Thus, we fix $\lambda>0$ and $\uu \in L^{p'}(\Rd;\Rm)$ such that
\begin{equation}\label{eq1}
\langle\A\bm\varphi, \uu\rangle_{p,p'}= \lambda \langle \bm{\varphi}, \uu\rangle _{p,p'}
\end{equation}
for every $\bm\varphi \in C^\infty_c(\Rd;\Rm)$. We have to show that $\uu\equiv {\bf 0}$. The main step consists in showing that
\begin{align}
\lambda \int_{\Rd}\zeta_n^2|\uu|^2|\uu|_\varepsilon^{p'-2} dx  \leq
C_*\int_{\Rd}(Q\nabla \zeta_n, \nabla \zeta_n)  |\uu|_\varepsilon^{p'}dx
\label{form-5}
\end{align}
for some positive constant $C_*$ and every $n\in\N$,
where $\zeta_n= \zeta(n^{-1}\log \psi)$ and $\zeta:[0,\infty)\to [0,1]$ is a smooth function such that $\zeta(s)= 1$ if $ s \in [0,1]$ and $\zeta(s)=0$ if $s\in [2,\infty)$.
Once this inequality is proved, the assumption \eqref{ipo_core} will be crucial to conclude. Indeed, letting $\varepsilon$ tend to $0$  and using the dominated convergence theorem we obtain 
\begin{align}\label{def_n}
\lambda \int_{\Rd}\zeta_n^2|\uu|^{p'} dx  &\leq
C_*\int_{\Rd}(Q\nabla \zeta_n, \nabla \zeta_n)  |\uu|^{p'}dx\notag\\
&=\frac{C_*}{n^2}\int_{\Rd}(\zeta'(n^{-1}\log \psi))^2\psi^{-2}( Q \nabla \psi, \nabla \psi) |\uu|^{p'}dx\notag\\
&\leq \frac{C_*K}{n^2}\int_{\Rd}(\zeta'(n^{-1}\log \psi))^2(\log\psi)^2|\uu|^{p'} dx,
\end{align}
thanks to \eqref{ipo_core}.
Thus, since the support of $\zeta'$ is contained in the set $\{x \in \Rd: n\le \log\psi(x)\le 2n\}$, we can use again the dominated convergence theorem to let $n$ tend to $+\infty$ in \eqref{def_n} and deduce that $\lambda \|\uu\|_{L^{p'}(\Rd; \mathbb C^m)}\le 0$, whence $\uu\equiv\bm{0}$.

So, let us prove \eqref{form-5} starting from \eqref{eq1}. Using Proposition \ref{reg_thm}(iv), we deduce that $\uu\in W^{2,p'}_{\rm loc}(\Rd;\R^m)$. 
Clearly, we can extend the validity of \eqref{eq1} to every function $\bm\varphi\in W^{2,p}(\R^d;\R^m)$ with compact support. Thus, we can write \eqref{eq1} with the function $\bm\varphi$ being replaced by $\bm{\varphi}_n:= \zeta_n^2\uu|\uu|_\varepsilon^{p'-2}$. Integrating by parts the second-order term in the left-hand side of such a formula and using the first part of \eqref{realLegendrefunc} and Hypothesis \ref{hyp_0}(iii),  we get
\begin{align}
&\lambda \int_{\Rd}\zeta_n^2|\uu|^2|\uu|_\varepsilon^{p'-2} dx\notag\\
=&-\int_{\Rd}\sum_{i=1}^m(Q\nabla u_i,\nabla u_i) \zeta_n^2|\uu|_\varepsilon^{p'-2}dx
-\frac{p'-2}{2}\int_{\Rd}\sum_{i=1}^m(Q\nabla |\uu|^2,\nabla u_i)u_i\zeta_n^2|\uu|_\varepsilon^{p'-4}dx\notag\\
&-\int_{\Rd}\sum_{i=1}^m(Q\nabla (\zeta_n^2),\nabla u_i)u_i |\uu|_\varepsilon^{p'-2}dx
-\int_{\Rd}\sum_{h,k=1}^d(A^{hk}D_k\uu , D_h\uu)\zeta_n^2|\uu|_\varepsilon^{p'-2}dx\notag\\
&- \frac{p'-2}{2} \int_{\Rd}\sum_{h,k=1}^d (A^{hk}\uu D_k|\uu|^2 , D_h\uu) \zeta_n^2|\uu|_\varepsilon^{p'-4}dx\notag\\
&-\int_{\Rd}\sum_{h,k=1}^d(A^{hk}\uu D_k\zeta_n^2 , D_h\uu) |\uu|_\varepsilon^{p'-2}dx
-\int_{\Rd}\zeta_n^2 ( V \uu, \uu) |\uu|_\varepsilon^{p'-2}dx\notag\\
\leq  &-\int_{\Rd}\sum_{i=1}^m (Q\nabla u_i, \nabla u_i) \zeta_n^2|\uu|_\varepsilon^{p'-2}dx-\frac{p'-2}{4}\int_{\Rd}(Q\nabla |\uu|^2, \nabla |\uu|^2)  \zeta_n^2|\uu|_\varepsilon^{p'-4}dx\notag\\
&-\frac{1}{2} \int_{\Rd}(Q\nabla\zeta_n^2, \nabla |\uu|^2)  |\uu|_\varepsilon^{p'-2}dx-  \int_{\Rd}\sum_{h,k=1}^d (A^{hk}\uu D_k\zeta_n^2 , D_h\uu) |\uu|_\varepsilon^{p'-2}dx\notag\\
&- \frac{p'-2}{2} \int_{\Rd}\sum_{h,k=1}^d (A^{hk}\uu D_k|\uu|^2 , D_h\uu) \zeta_n^2|\uu|_\varepsilon^{p'-4}dx,
\label{form-10}
\end{align}
where we used also the fact that $\nabla|\uu|^2=2\displaystyle\sum_{i=1}^mu_i\nabla u_i$ and $\displaystyle\sum_{h,k=1}^d(A^{hk}D_k\uu,D_h\uu)\ge 0$ on $\Rd$. Thanks to \eqref{C-S_0} and \eqref{C-S}, we can estimate
\begin{align}
&\bigg|\int_{\Rd}(Q\nabla\zeta_n^2, \nabla |\uu|^2)  |\uu|_\varepsilon^{p'-2}dx\bigg|\notag\\
=&2\bigg|\int_{\Rd}\zeta_n(Q\nabla\zeta_n, \nabla |\uu|^2)  |\uu|_\varepsilon^{p'-2}dx\bigg|\notag\\
\leq &2\varepsilon_1 \int_{\Rd}(Q\nabla |\uu|^2, \nabla |\uu|^2)\zeta_n^2  |\uu|_\varepsilon^{p'-4}dx +\frac{(1+c_0)^2}{2\varepsilon_1}\int_{\Rd}(Q\nabla\zeta_n, \nabla \zeta_n) |\uu|_\varepsilon^{p'}dx,
\label{form-7}
\end{align}

\begin{align}
&\bigg |\int_{\Rd}\sum_{h,k=1}^d (A^{hk}\uu D_k\zeta_n^2,D_h\uu)|\uu|_\varepsilon^{p'-2}dx\bigg |\notag\\
=& 2\bigg |\int_{\Rd}\zeta_n\sum_{h,k=1}^d (A^{hk}\uu D_k\zeta_n,D_h\uu)|\uu|_\varepsilon^{p'-2}dx\bigg |\notag\\
\leq &{\mathscr C}\varepsilon_1\int_{\Rd} \sum_{i=1}^m (Q\nabla u_i, \nabla u_i)\zeta_n^2 |\uu|_\varepsilon^{p'-2} dx +
\frac{4{\mathscr C}}{\varepsilon_1} \int_{\Rd}(Q\nabla \zeta_n, \nabla \zeta_n)  |\uu|_\varepsilon^{p'}dx
\label{form-9}
\end{align}

\begin{align}
& \bigg | \int_{\Rd}\sum_{h,k=1}^d (A^{hk}\uu D_k|\uu|^2 , D_h\uu) \zeta_n^2|\uu|_\varepsilon^{p'-4}dx\bigg |\notag\\
\leq &2{\mathscr C}\varepsilon_2\int_{\Rd} \sum_{i=1}^m (Q\nabla u_i, \nabla u_i) \zeta_n^2|\uu|_\varepsilon^{p'-2} dx +
\frac{{\mathscr C}}{2\varepsilon_2} \int_{\Rd}(Q\nabla |\uu|^2, \nabla |\uu|^2) \zeta_n^2 |\uu|_\varepsilon^{p'-4}dx,
\label{form-99}
\end{align}
for every $\varepsilon_1, \varepsilon_2>0$. Replacing \eqref{form-7}-\eqref{form-99} in the last side of \eqref{form-10}, we get
\begin{align*}
&\lambda \int_{\Rd}\zeta_n^2|\uu|^2|\uu|_\varepsilon^{p'-2} dx\notag\\
\leq
& \left( -1+ {\mathscr C}\varepsilon_1 + {\mathscr C}\varepsilon_2|p'-2|  \right) \int_{\Rd}\sum_{i=1}^m (Q\nabla u_i, \nabla u_i) \zeta_n^2|\uu|_\varepsilon^{p'-2}dx\notag\\
&+\left(-\frac{p'-2}{4} + \varepsilon_1 + \frac{{\mathscr C}|p'-2| }{4\varepsilon_2} \right)\int_{\Rd}(Q\nabla |\uu|^2, \nabla |\uu|^2)  \zeta_n^2|\uu|_\varepsilon^{p'-4}dx\notag\notag\\
&+\left(\frac{(1+c_0)^2}{4\varepsilon_1} + \frac{4{\mathscr C}}{\varepsilon_1} \right)\int_{\Rd}(Q\nabla \zeta_n, \nabla\zeta_n)|\uu|_\varepsilon^{p'}dx=: h_{p'}(\varepsilon_1, \varepsilon_2).
\end{align*}
Setting 
\begin{align*}
A_n=&\int_{\Rd}\sum_{i=1}^m(Q\nabla u_i, \nabla u_i)\zeta_n^2|\uu|_\varepsilon^{p-2} dx,\\[2mm]
B_n=&\int_{\Rd}(Q\nabla|\uu|^2,\nabla|\uu|^2)\zeta_n^2|\uu|_\varepsilon^{p-4} dx,\\[2mm]
C_n=&\int_{\Rd}(Q\nabla\zeta_n, \nabla \zeta_n)  |\uu|_\varepsilon^{p'}dx
\end{align*}
and arguing as in the proof of Lemma \ref{lp_diss} and recalling that $B_n\le 4A_n$, we conclude that
\begin{align*}
\lambda \int_{\Rd}\zeta_n^2|\uu|^2|\uu|_\varepsilon^{p'-2} dx\le &\left( -1 + {\mathscr C}\varepsilon_2|p'-2|\right) A_n
+\left(-\frac{p'-2}{4}+ \frac{{\mathscr C}|p'-2| }{4\varepsilon_2} \right)B_n\\
&+4({\mathscr C}+1)\varepsilon_1 A_n+\left( \frac{(1+c_0)^2}{4\varepsilon_1} + \frac{4{\mathscr C}}{\varepsilon_1} \right)C_n.
\end{align*}

Now, we observe that condition \eqref{condition_p} implies that $p'$ belongs to the set $\widetilde J$ (defined in \eqref{luca_0000}). Therefore, applying the same arguments as in the proof of Lemma \ref{lp_diss}, we conclude that there exists a positive constant $\delta$ such that
\begin{eqnarray*}
\left( -1 + {\mathscr C}\varepsilon_2|p'-2|\right) A_n
+\left(-\frac{p'-2}{4}+ \frac{{\mathscr C}|p'-2| }{4\varepsilon_2} \right)B_n\le -\delta A_n
\end{eqnarray*}
and, using this inequality, we can infer that
\begin{align*}
\lambda \int_{\Rd}\zeta_n^2|\uu|^2|\uu|_\varepsilon^{p'-2} dx\le & [-\delta+4({\mathscr C}+1)\varepsilon_1]A_n
+ \bigg (\frac{(1+c_0)^2}{4\varepsilon_1} + \frac{4{\mathscr C}}{\varepsilon_1}\bigg )C_n.
\end{align*}

Taking $\varepsilon_1=\displaystyle\frac{\delta}{4(\mathscr{C}+1)}$, estimate \eqref{form-5} follows at once.

{\em Step 2.} Here, we complete the proof showing that the 
realization of the operator $\A$ in $L^p(\Rd;\mathbb C^m)$, with maximal domain, generates a strongly continuous semigroup.

First of all, let us observe that Hypotheses \ref{hyp_0} allow to apply the results in Proposition \ref{reg_thm}, in Lemma \ref{lp_diss} and in Step 1 also to the operator \begin{align*}
\A^*=\sum_{h,k=1}^dD_h((Q^{kh})^TD_k)-V^T.
\end{align*}
Now, let $(\overline{\A}, D)$ be the closure of $(\A,C_c^\infty(\Rd;\Rm))$ in $L^p(\Rd;\Rm)$ and fix $\uu \in D$. Then, there exists a sequence $(\uu_n)\in C^\infty_c(\Rd;\Rm)$ such that $\uu_n$ converges to $\uu$ and $\A\uu_n$ converges to $\g=:\overline{\A}\uu$ in $L^p(\Rd;\Rm)$. Moreover, taking the limit as $n$ tends to $+\infty$ in the equality $\langle \uu_n, \A^*\bm \varphi \rangle_{p,p'}=\langle \A \uu_n, \bm \varphi \rangle_{p,p'}$, which holds true for any $\bm \varphi \in C^\infty_c(\Rd;\Rm)$,  we deduce
\begin{equation}\label{int}
\int_{\Rd}(\uu, \A^*\bm \varphi)dx=\int_{\Rd}(\g, \bm \varphi )dx
\end{equation}
for any $\bm \varphi \in C^\infty_c(\Rd;\Rm)$.
The equality \eqref{int} and Proposition \ref{reg_thm}(iv) imply that $\uu\in W^{2,p}_{\rm loc}(\Rd;\Rm)$ and that $\A\uu=\g=\overline{\A}\uu$; hence $\A\uu\in L^p(\Rd; \Rm)$. Consequently, $\uu \in D_{p,\rm{max}}(\A)$.
To prove that $D_{p,\rm{max}}(\A)\subset D$, first we show that  $\lambda I-\A$ is injective on $D_{p,\rm{max}}(\A)$ for some $\lambda>0$. Indeed, let $\uu \in D_{p,\rm{max}}(\A)$ be such that $(\lambda I-\A)\uu=\bm 0$. Then,
\begin{equation}\label{gag}
\int_{\Rd}( \uu, \lambda {\bm \varphi}-\A^*{\bm \varphi}) dx= \int_{\Rd}( \lambda \uu-\A\uu, {\bm \varphi}) dx=0, \qquad\;\, \bm \varphi \in C^\infty_c(\Rd;\Rm).
\end{equation}
Since $C^\infty_c(\Rd;\Rm)$ is a core of the closure of $(\A^*, C^\infty_c(\Rd;\Rm))$ in $L^{p'}(\Rd;\Rm)$, from equality \eqref{gag} we deduce that $\uu\equiv \bm 0$.

Now, we are almost done. Indeed, fix $\uu \in D_{p,\rm{max}}(\A)$ and set $\vv= \lambda \uu-\A\uu$. Then, $\vv \in L^p(\Rd;\Rm)$ and Step 1
guarantees the existence of a function $\zz\in D$ such that $\lambda \zz-\overline{\A}\zz=\vv= \lambda \uu-\A\uu$.
Since $D\subset D_{p,\rm{max}}(\A)$, the function $\ww=\zz-\uu$ belongs to $D_{p,\rm{max}}(\A)$ and satisfies the equation $\lambda \ww-\A\ww=\bm 0$. The injectivity of $\lambda-\A$ on $D_{p,\rm{max}}(\A)$ yields immediately that $\ww=\bm 0$ or equivalently that $\uu=\zz\in D$. The last assertion of the claim then easily follows by the equality $D= D_{p,\rm{max}}(\A)$.
\end{proof}

\begin{rmk}\label{esami}
{\rm 
\begin{enumerate}[\rm (i)]
\item
If we take $v_{ij}\equiv 0$ for any $i,j=1,\ldots, m$ in Theorem \ref{mainth} then we deduce that the realization ${\bf A}_{0,p}$ of $\A_0=\sum_{h,k=1}^d D_h(Q^{hk}D_k)$ in $L^p(\Rd;\Rm)$ endowed with the maximal domain $D_{p,\rm{max}}(\A_0)$ generates a contractive semigroup in $L^p(\Rd;\R^m)$. As a consequence, $({\bf A}_{0,p},D_{p,\rm{max}}(\A_0))$ is a closed operator in $L^p(\Rd;\Rm)$.
\item
Condition \eqref{ipo_core} has been already considered in \cite{ALM} in the scalar case and in the context of $L^p$-spaces related to invariant measures.
\item
Condition \eqref{condition_p} is the best condition on $p$ which guarantees that both $p$ and $p'$ belong to $\widetilde J$.
\end{enumerate}}
\end{rmk}

\begin{thm}\label{sect_pro}
Besides the assumptions of Theorem $\ref{mainth}$,  assume  that
\begin{equation}\label{ipo_ana}
|{\rm Im}(V\zeta,\zeta)| \le c_V {\rm Re}(V \zeta, \zeta)
\end{equation}
in $\Rd$, for every $\zeta \in \Cm$ and some positive constant $c_V$. Then, for every $p$ which satisfies \eqref{condition_p}, the operator ${\bf A}_p$ generates an analytic semigroup in $L^p(\Rd;\mathbb C^m)$.
\end{thm}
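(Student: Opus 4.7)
The plan is to upgrade the $C_0$-semigroup of contractions produced in Theorem~\ref{mainth} to an analytic one by establishing a sectorial estimate for the $L^p$-numerical range of ${\bf A}_p$. Concretely, I would prove the existence of a constant $M>0$ such that
\begin{equation*}
\bigg|{\rm Im}\int_{\Rd}(\A\uu,\uu)|\uu|_\varepsilon^{p-2}\,dx\bigg|\le M\,\bigg|{\rm Re}\int_{\Rd}(\A\uu,\uu)|\uu|_\varepsilon^{p-2}\,dx\bigg|
\end{equation*}
for every $\uu\in C_c^\infty(\Rd;\Cm)$ and every $\varepsilon>0$. Since $C_c^\infty(\Rd;\Cm)$ is a core of ${\bf A}_p$ (by the last assertion of Theorem~\ref{mainth}, whose proof carries over verbatim to complex-valued test functions) and the operator is $m$-dissipative, this inequality confines the numerical range to a proper subsector of the closed left half-plane; analyticity then follows from the standard Lumer--Phillips-type sectoriality criterion.

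The starting point is the identity \eqref{formula}, which decomposes $\int_{\Rd}(\A\uu,\uu)|\uu|_\varepsilon^{p-2}\,dx$ as the sum of five contributions: a $Q$-diagonal quadratic form in $\nabla\uu$, an $A^{hk}$-diagonal quadratic form in $\nabla\uu$, two cross terms carrying the prefactor $(p-2)/2$ and the factor $\nabla|\uu|^2$, and the potential term involving $V$. First I would sharpen the real-part lower bound of Lemma~\ref{lp_diss} by retaining the $V$ contribution instead of discarding it: a verbatim repetition of the Cauchy--Schwarz/Young manipulations of that lemma yields
\begin{equation*}
-{\rm Re}\int_{\Rd}(\A\uu,\uu)|\uu|_\varepsilon^{p-2}\,dx\ge \delta\,\mathcal{A}_\varepsilon(\uu)+\int_{\Rd}{\rm Re}(V\uu,\uu)|\uu|_\varepsilon^{p-2}\,dx,
\end{equation*}
where $\mathcal{A}_\varepsilon(\uu):=\sum_{i=1}^m\int_{\Rd}{\rm Re}(Q\nabla u_i,\nabla u_i)|\uu|_\varepsilon^{p-2}\,dx\ge 0$ and $\delta>0$ is the constant furnished by Lemma~\ref{lp_diss}, whose hypotheses are met because condition \eqref{condition_p} forces $p\in\widetilde J$.

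Next I would bound the imaginary part of each of the five summands by a constant multiple of $\mathcal{A}_\varepsilon(\uu)$ or of $\int_{\Rd}{\rm Re}(V\uu,\uu)|\uu|_\varepsilon^{p-2}\,dx$. The $Q$-diagonal piece is controlled by $c_0\,\mathcal{A}_\varepsilon(\uu)$ via \eqref{Im_Q}; the $A^{hk}$-diagonal piece is controlled by $\mathscr C\,\mathcal{A}_\varepsilon(\uu)$ via \eqref{comLeg}; the potential piece is controlled by $c_V\int_{\Rd}{\rm Re}(V\uu,\uu)|\uu|_\varepsilon^{p-2}\,dx$, which is precisely the content of the newly assumed sector condition \eqref{ipo_ana}. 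The two $(p-2)/2$ cross terms are treated as in \eqref{form-2}: applying the Cauchy--Schwarz inequalities \eqref{C-S_0} and \eqref{C-S} together with the pointwise bound \eqref{disDmod} and Young's inequality, each is bounded by a constant multiple of $\mathcal{A}_\varepsilon(\uu)$, with constants depending only on $|p-2|$, $c_0$ and $\mathscr C$.

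Summing these five estimates produces an inequality of the form
\begin{equation*}
\bigg|{\rm Im}\int_{\Rd}(\A\uu,\uu)|\uu|_\varepsilon^{p-2}\,dx\bigg|\le M_1\,\mathcal{A}_\varepsilon(\uu)+c_V\int_{\Rd}{\rm Re}(V\uu,\uu)|\uu|_\varepsilon^{p-2}\,dx,
\end{equation*}
which combined with the refined real-part lower bound yields the desired sector estimate with $M=\max(M_1/\delta,c_V)$. The most delicate step is the control of the $(p-2)/2$ cross terms, whose imaginary parts mix gradients with values and thus force us to exploit the full strength of \eqref{C-S_0}, \eqref{C-S} and \eqref{disDmod}. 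In contrast with Lemma~\ref{lp_diss}, however, the Young parameters here can be chosen freely, since we only need the finiteness of $M$ and not any optimality of the constant; in particular, no new restriction on $p$ beyond \eqref{condition_p} should arise.
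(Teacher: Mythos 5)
Your proposal is correct and follows essentially the same approach as the paper: both derive the sectoriality estimate $\bigl|{\rm Im}\int(\A\uu,\uu)|\uu|_\varepsilon^{p-2}\,dx\bigr|\le -C_p\,{\rm Re}\int(\A\uu,\uu)|\uu|_\varepsilon^{p-2}\,dx$ on the core $C_c^\infty(\Rd;\Cm)$ by bounding the five contributions in \eqref{formula}, using \eqref{Im_Q}, \eqref{comLeg}, the new hypothesis \eqref{ipo_ana}, and the Cauchy--Schwarz inequalities \eqref{C-S_0}, \eqref{C-S} together with \eqref{disDmod} for the cross terms. The only cosmetic difference is in how the refined real-part bound (the paper's \eqref{prelim}) is obtained: you re-run the Lemma~\ref{lp_diss} argument without discarding the $V$-term, while the paper reaches the same estimate indirectly via the identity $(V\uu,\uu)=(\A_0\uu,\uu)-(\A\uu,\uu)$ and the $L^p$-dissipativity of $\A_0$.
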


\begin{proof}
By \cite[Chapter I, Section 5.8]{goldstein} and taking into account that $C^{\infty}_c(\Rd;\mathbb C^m)$ is a core of ${\bf A}_p$, it suffices to show that there exists a positive constant $C_p$ such that 
\begin{equation*}
\left|{\rm Im} \int_{\Rd}({\A}\uu,\uu)|\uu|^{p-2} dx\right| \leq  -C_p{\rm Re} \int_{\Rd}(\A\uu,\uu)|\uu|^{p-2} dx
\end{equation*}
for every $\uu\in C_c^\infty(\Rd, \Cm)$.

First of all, we point out that, thanks to \eqref{C-S_0} we can estimate
\begin{align}\label{C-S1}
\bigg |\sum_{i=1}^m (Q\vartheta_i,\eta_i)\bigg |\le &
\sum_{i=1}^m |(Q\vartheta_i,\eta_i)|\notag\\
\le &(1+c_0)\sum_{i=1}^m ({\rm Re}(Q\vartheta_i,\vartheta_i))^{\frac{1}{2}}({\rm Re}(Q\eta_i,\eta_i))^{\frac{1}{2}}\notag\\
\le &(1+c_0)\bigg ({\rm Re}\sum_{i=1}^m(Q\vartheta_i,\vartheta_i)\bigg )^{\frac{1}{2}}\bigg ({\rm Re}\sum_{i=1}^m(Q\eta_i,\eta_i)\bigg )^{\frac{1}{2}}
\end{align}
for any $\vartheta_i, \eta_i\in\mathbb{C}^d$, $(i=1, \ldots, m)$.
Moreover, from Hypothesis \ref{hyp_0}(iii) and the formula $(V\uu,\uu)=(\A_0\uu,\uu)-(\A\uu,\uu)$, it follows that
\begin{align*}
0 \le &{\rm Re}\int_{\Rd}\bigg(\sum_{i=1}^m (Q \nabla u_i, \nabla u_i)+(V\uu,\uu)\bigg )  |\uu|_\varepsilon^{p-2}dx\\
= &
{\rm Re}\int_{\Rd}\bigg(\sum_{i=1}^m (Q \nabla u_i, \nabla u_i)+(\A_0\uu,\uu)-(\A\uu,\uu)\bigg )  |\uu|_\varepsilon^{p-2}dx\notag\\
\le &
{\rm Re}\int_{\Rd}\bigg(\sum_{i=1}^m (Q \nabla u_i, \nabla u_i)-(\A\uu,\uu)\bigg )  |\uu|_\varepsilon^{p-2}dx\notag
\end{align*}
for any $\uu\in C^{\infty}_c(\Rd;\Cm)$,
where we have used Lemma \ref{lp_diss} to deduce that
\begin{eqnarray*}
{\rm Re}\int_{\Rd}(\A_0\uu,\uu)|\uu|_{\varepsilon}^{p-2}dx\le 0.
\end{eqnarray*}
Hence, taking advantage of formula \eqref{luca_00}, we can infer that
for any $p$, which satisfies condition \eqref{condition_p}, there exists a positive constant $c_p$ such that
\begin{equation}
{\rm Re}\int_{\Rd}\bigg(\sum_{i=1}^m (Q \nabla u_i, \nabla u_i)+(V\uu,\uu)\bigg )  |\uu|_\varepsilon^{p-2}dx\le -c_p\int_{\Rd}{\rm Re}(\A\uu,\uu)|\uu|_\varepsilon^{p-2}dx
\label{prelim}
\end{equation}
for all $\uu\in C_c^\infty(\Rd, \Cm)$.
Thus, taking formula \eqref{formula} into account and using the assumptions \eqref{Im_Q}, \eqref{comLeg} and also condition \eqref{ipo_ana}, we obtain
\begin{align*}
\bigg |{\rm Im}\int_{\Rd}(\A\uu,\uu)|\uu|_\varepsilon^{p-2} dx\bigg |
\le & \int_{\Rd}\sum_{i=1}^m|{\rm Im}(Q \nabla u_i, \nabla u_i)||\uu|_\varepsilon^{p-2}dx\\
&+\int_{\Rd}\bigg |{\rm Im}\sum_{h,k=1}^d (A^{hk}D_k \uu, D_h\uu)\bigg | |\uu|_\varepsilon^{p-2}dx\notag\\
&+ \frac{|p-2|}{2}\int_{\Rd}\bigg |{\rm Im}\sum_{i=1}^m(Q\nabla u_i,\nabla|\uu|^2)\overline{u_i}\bigg ||\uu|_\varepsilon^{p-4}dx\\
&+ \frac{|p-2|}{2} \int_{\Rd}\bigg |{\rm Im}\sum_{h,k=1}^d (A^{hk}D_k \uu, \uu)D_h|\uu|^2\bigg | |\uu|_\varepsilon^{p-4}dx\\
&+\int_{\Rd}|{\rm Im}(V\uu,\uu)|
|\uu|_\varepsilon^{p-2}dx\notag\\
\le & (c_0+\mathscr{C})\int_{\Rd}\sum_{i=1}^m{\rm Re}(Q \nabla u_i, \nabla u_i)|\uu|_\varepsilon^{p-2}dx\\
&+ \frac{|p-2|}{2}\int_{\Rd}\bigg |{\rm Im}\sum_{i=1}^m(Q\nabla u_i,\nabla|\uu|^2)\overline{u_i}\bigg ||\uu|_\varepsilon^{p-4}dx\\
&+ \frac{|p-2|}{2} \int_{\Rd}\bigg |{\rm Im}\sum_{h,k=1}^d (A^{hk}D_k \uu, \uu)D_h|\uu|^2\bigg | |\uu|_\varepsilon^{p-4}dx\\
&+c_V\int_{\Rd}{\rm Re}(V\uu,\uu)
|\uu|_\varepsilon^{p-2}dx.
\end{align*}

Now, using the Cauchy-Schwarz inequalities \eqref{C-S} and \eqref{C-S1},  we can estimate
\begin{align*}
&\bigg |{\rm Im}\sum_{i=1}^m(Q\nabla u_i,\nabla |\uu|^2)\overline{u_i}\bigg ||\uu|_\varepsilon^{p-4}+
\bigg |{\rm Im}\sum_{h,k=1}^d(A^{hk}D_k \uu, \uu D_h|\uu|^2)\Big||\uu|_\varepsilon^{p-4}\\
\le & (1+c_0+2\mathscr{C})|\uu|\bigg ({\rm Re}\sum_{i=1}^m(Q\nabla u_i,\nabla u_i)\bigg )^{\frac{1}{2}}(Q\nabla |\uu|^2,\nabla |\uu|^2)^{\frac{1}{2}}|\uu|_\varepsilon^{p-4}\\
\le & (1+c_0+2\mathscr{C})\bigg ({\rm Re}\sum_{i=1}^m(Q\nabla u_i,\nabla u_i)\bigg )^{\frac{1}{2}}(Q\nabla |\uu|^2,\nabla |\uu|^2)^{\frac{1}{2}}|\uu|_\varepsilon^{p-3}\\
\le &\bigg (\frac{1+c_0}{2}+\mathscr{C}\bigg )\left( {\rm Re}\sum_{i=1}^m(Q\nabla u_i,\nabla u_i) |\uu|_\varepsilon^{p-2}+  (Q \nabla |\uu|^2,\nabla|\uu|^2)|\uu|_\varepsilon^{p-4}\right)\\
\le & 5\bigg (\frac{1+c_0}{2}+\mathscr{C}\bigg ) {\rm Re}\sum_{i=1}^m(Q\nabla u_i,\nabla u_i)|\uu|_\varepsilon^{p-2},
\end{align*}
where in the last line we used estimate \eqref{disDmod}.
Summing up, we obtain
\begin{align*}
\bigg |{\rm Im}\int_{\Rd}(\A\uu,\uu)|\uu|_\varepsilon^{p-2} dx\bigg |\le & C_1\int_{\Rd} {\rm Re}\sum_{i=1}^m(Q\nabla u_i,\nabla u_i) |\uu|_\varepsilon^{p-2}dx\\
&+ c_V \int_{\Rd}{\rm Re}(V\uu,\uu)|\uu|_{\varepsilon}^{p-2}dx,
\end{align*}
where $C_1$ is a positive constant depending only on $c_0$, $\mathscr{C}$ and $p$. Using estimate \eqref{prelim} and letting $\varepsilon$ tend to $0$, we conclude the proof.
\end{proof}

\section{Domain characterization}
\label{sect-4}
In this section, under additional conditions on the matrix-valued functions $Q$ and $V$ we provide a characterization of the domain of ${\bf A}_p$. We start by a preliminary result.

\begin{prop}\label{prop_vu}
Under Hypothesis $\ref{hyp_0}$, assume that there exist a function $v\in C^1(\Rd)$, with positive infimum $v_0$, and two positive constants $\gamma$ and $C_\gamma$ such that
\begin{equation}\label{ipov}
(i)\,\,\,(Q \nabla v, \nabla v)^{\frac{1}{2}} \le \gamma v^{\frac{3}{2}}+C_\gamma,\qquad\;\,(ii)\,\,\,(V\xi,\xi)\ge v|\xi|^2
\end{equation}
in $\Rd$ for any $\xi \in \R^m$. Further, assume that $\mathscr C\in (0,\frac{1}{2})$, $p\in \left (1+\frac{6\mathscr{C}}{4{\mathscr C}+1},\frac{3}{2}+\frac{1}{4\mathscr C}\right )$ and
\begin{equation}\label{ipo_p}
\Lambda_p:=1-\frac{(p-1)^2(\gamma v_0^{3/2}+C_\gamma )^2(1+c_0+2\mathscr{C})^2}{4v_0^3\Theta_{p,\mathscr{C}}}>0,
\end{equation}
where
\begin{equation*}
\Theta_{p,\mathscr{C}}=\left\{
\begin{array}{ll}
p-1-2\mathscr{C}(5-2p), \quad &p \in (1,2),\\[1mm]
1-2\mathscr{C}(2p-3), \quad &p\in [2,\infty).
\end{array}
\right.
\end{equation*}
Then, there exists a positive constant $K$ such that
\begin{equation}\label{vu}
\|v\uu\|_p \le K\|\A\uu\|_p, \qquad\;\, \uu\in C^\infty_c(\Rd;\mathbb{C}^m).
\end{equation}
\end{prop}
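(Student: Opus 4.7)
The plan is to mirror the $L^p$-dissipativity analysis of Lemma \ref{lp_diss}, but testing $\A\uu$ against the extra weight $v^{p-1}$. Concretely, given $\uu\in C_c^\infty(\Rd;\Cm)$, I would set $\bm{\varphi}:=v^{p-1}\uu|\uu|_\varepsilon^{p-2}$ (the regularisation $|\uu|_\varepsilon$ is needed only when $p<2$), note that $\bm{\varphi}$ is compactly supported and $C^1$, integrate $\int_{\Rd}(\A\uu,\bm{\varphi})dx$ by parts, take real parts and rearrange exactly as in \eqref{formula}--\eqref{form-1} to obtain
\begin{align*}
-\mathrm{Re}\int_{\Rd}(\A\uu,\bm{\varphi})\,dx=P_\varepsilon(\uu)+D_\varepsilon(\uu)+R_\varepsilon(\uu),
\end{align*}
where $P_\varepsilon(\uu):=\int_{\Rd}\mathrm{Re}(V\uu,\uu)v^{p-1}|\uu|_\varepsilon^{p-2}dx$ is the ``potential'' piece, $D_\varepsilon(\uu)$ collects all the diffusive terms appearing in the right-hand side of \eqref{form-1} (now with the additional positive weight $v^{p-1}$), and
\begin{align*}
R_\varepsilon(\uu):=(p-1)\,\mathrm{Re}\!\sum_{h,k=1}^d\int_{\Rd}v^{p-2}|\uu|_\varepsilon^{p-2}(Q^{hk}D_k\uu,\uu)D_hv\,dx
\end{align*}
is a new cross term produced by the derivatives falling on the weight $v^{p-1}$.

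To bound $P_\varepsilon$ I would apply hypothesis (ii) to the real and imaginary parts of $\uu$ separately (using the fact that $V$ is real-valued) to obtain $\mathrm{Re}(V\uu,\uu)\ge v|\uu|^2$, whence $P_\varepsilon(\uu)\ge J_\varepsilon:=\int_{\Rd}v^p|\uu|^2|\uu|_\varepsilon^{p-2}dx$. For $D_\varepsilon$, since the weight $v^{p-1}$ is pointwise positive and does not interact with the pointwise Cauchy--Schwarz inequality \eqref{C-S} applied to the $A^{hk}$-terms, I would repeat verbatim the Young-plus-$B\le 4A$ bookkeeping of Lemma \ref{lp_diss}; under the stated conditions on $p$ and $\mathscr{C}$ this yields
\begin{align*}
D_\varepsilon(\uu)\ge \Theta_{p,\mathscr{C}}\,I_\varepsilon,\qquad I_\varepsilon:=\int_{\Rd}v^{p-1}|\uu|_\varepsilon^{p-2}\sum_{i=1}^m\mathrm{Re}(Q\nabla u_i,\nabla u_i)\,dx,
\end{align*}
with $\Theta_{p,\mathscr{C}}>0$.

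The really new step is the estimate for $R_\varepsilon$. Combining the Cauchy--Schwarz inequalities \eqref{C-S_0}, \eqref{C-S} and \eqref{C-S1} (using the decomposition $Q^{hk}=q_{hk}I+A^{hk}$) gives the pointwise bound
\begin{align*}
\bigg|\sum_{h,k=1}^d(Q^{hk}D_k\uu,\uu)D_hv\bigg|\le(1+c_0+2\mathscr{C})|\uu|(Q\nabla v,\nabla v)^{1/2}\bigg(\sum_{i=1}^m\mathrm{Re}(Q\nabla u_i,\nabla u_i)\bigg)^{\!\!1/2}\!.
\end{align*}
Hypothesis (i) combined with $v\ge v_0$ gives $(Q\nabla v,\nabla v)^{1/2}\le v_0^{-3/2}(\gamma v_0^{3/2}+C_\gamma)v^{3/2}$. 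A Young inequality with parameter $\epsilon>0$ then splits the resulting integrand as $\tfrac{\epsilon}{2}v^{p-1}|\uu|_\varepsilon^{p-2}\sum_i\mathrm{Re}(Q\nabla u_i,\nabla u_i)+\tfrac{1}{2\epsilon}v^p|\uu|^2|\uu|_\varepsilon^{p-2}$, producing $|R_\varepsilon(\uu)|\le \tfrac{\alpha}{2}(\epsilon\,I_\varepsilon+\epsilon^{-1}J_\varepsilon)$ with $\alpha:=(p-1)(1+c_0+2\mathscr{C})(\gamma v_0^{3/2}+C_\gamma)v_0^{-3/2}$.

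Combining the three bounds and choosing the optimal $\epsilon:=2\Theta_{p,\mathscr{C}}/\alpha$ annihilates the $I_\varepsilon$-contribution and leaves exactly $\Lambda_p=1-\alpha^2/(4\Theta_{p,\mathscr{C}})$ in front of $J_\varepsilon$, a positive quantity by \eqref{ipo_p}. Therefore $\Lambda_pJ_\varepsilon\le -\mathrm{Re}\int_{\Rd}(\A\uu,\bm{\varphi})dx\le \int_{\Rd}|\A\uu|v^{p-1}|\uu||\uu|_\varepsilon^{p-2}dx$. Letting $\varepsilon\downarrow 0$ by monotone convergence and applying H\"older's inequality to the right-hand side (noting $p'(p-1)=p$), I would conclude $\Lambda_p\|v\uu\|_p^p\le \|\A\uu\|_p\|v\uu\|_p^{p-1}$, which is \eqref{vu} with $K=\Lambda_p^{-1}$. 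The main obstacle is the second step: carefully verifying that the weighted Young-plus-$B\le 4A$ bookkeeping really reproduces the specific constant $\Theta_{p,\mathscr{C}}$ of the statement with exactly the admissible ranges $\mathscr{C}<1/2$ and $p\in(1+6\mathscr{C}/(4\mathscr{C}+1),3/2+1/(4\mathscr{C}))$; once that is in place, the cross-term argument and the final Young optimisation are routine.
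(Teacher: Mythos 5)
Your proposal follows the same overall strategy as the paper: test $\A\uu$ against $v^{p-1}\uu|\uu|_\varepsilon^{p-2}$, split into potential, diffusive, and $\nabla v$-cross terms, use the Cauchy--Schwarz inequalities \eqref{C-S_0}--\eqref{C-S1} and a Young step, then pass to the limit and apply H\"older. Two of your choices are genuine and welcome simplifications over the paper. First, you lump the two cross terms (which the paper treats as $\Gamma_1$ and $\Gamma_4$ with \emph{four} Young parameters $\varepsilon_0,\ldots,\varepsilon_3$, optimised in Appendix~\ref{app-B}) into a single integral, bound it by one combined pointwise Cauchy--Schwarz with constant $1+c_0+2\mathscr{C}$ together with $(Q\nabla v,\nabla v)^{1/2}\le v_0^{-3/2}(\gamma v_0^{3/2}+C_\gamma)v^{3/2}$, and then use a \emph{single} Young parameter $\epsilon$; because the paper's four ratios $\sqrt{a_2/a_1}$, etc.\ are all proportional, your one-parameter optimisation recovers the exact same $\Lambda_p$ with far less bookkeeping. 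Second, your closing step via H\"older with $p'(p-1)=p$ is cleaner than the paper's auxiliary $\delta$-Young estimate on $B(0,R)$.

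One remark on the step you flag as the ``main obstacle''. The bound $D_\varepsilon\ge\Theta_{p,\mathscr{C}}I_\varepsilon$ is not obtained by ``repeating verbatim'' the $\sigma$-optimised bookkeeping of Lemma~\ref{lp_diss}: that optimisation would give the \emph{sharper} coefficient $1-\mathscr{C}^2(p-2)$ for $p\ge 2$, not $\Theta_{p,\mathscr{C}}=1-2\mathscr{C}(2p-3)$. What the paper actually does (and what you should do) is simpler and cruder: estimate $\Gamma_2\ge -2\mathscr{C}I_\varepsilon$ and $\Gamma_3\ge -4\mathscr{C}|p-2|I_\varepsilon$ directly by \eqref{C-S} and \eqref{disDmod} pointwise, and then drop the $\tfrac{p-2}{4}B_v$ term if $p\ge 2$ or bound it by $(p-2)I_\varepsilon$ via $B_v\le 4I_\varepsilon$ if $p<2$. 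This is a fixed-$\sigma$ estimate, not an optimisation; it is the source of the precise form of $\Theta_{p,\mathscr{C}}$ and of the $p$-range $\left(1+\tfrac{6\mathscr{C}}{4\mathscr{C}+1},\tfrac{3}{2}+\tfrac{1}{4\mathscr{C}}\right)$ (which is exactly where $\Theta_{p,\mathscr{C}}>0$). Once you make that substitution your argument is complete and matches the paper's $\Lambda_p$ exactly.
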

\begin{proof}
Since the coefficients of the operator $\A$ are real-valued, we can limit ourselves to considering functions with values in $\R^m$. We fix $\uu\in C^{\infty}_c(\Rd;\Rm)$, $\varepsilon>0$ and set $\f=-\A\uu$. Then,
\begin{align}
\int_{\Rd}(\f,\uu)|\uu|_\varepsilon^{p-2}v^{p-1}dx=&-\int_{\Rd}(\A\uu,\uu)|\uu|_\varepsilon^{p-2}v^{p-1}dx\notag\\
=&\sum_{i=1}^m \int_{\Rd}(Q \nabla u_i,\nabla u_i)|\uu|_{\varepsilon}^{p-2}v^{p-1}dx\notag\\
&+\frac{p-2}{4} \int_{\Rd}(Q\nabla |\uu|^2,\nabla |\uu|^2)
|\uu|_{\varepsilon}^{p-4}v^{p-1}dx\notag\\
&+\frac{p-1}{2} \int_{\Rd}(Q\nabla |\uu|^2,\nabla v)|\uu|_{\varepsilon}^{p-2}v^{p-2} dx\notag\\
&+\sum_{h,k=1}^d \int_{\Rd}(A^{hk}D_k \uu, D_h \uu)|\uu|_{\varepsilon}^{p-2}v^{p-1}dx\notag\\
&+\frac{p-2}{2}\sum_{h, k=1}^d \int_{\Rd}(A^{hk}D_k \uu, \uu)(D_h|\uu|^2)|\uu|_{\varepsilon}^{p-4}v^{p-1}dx\notag\\
&+(p-1)\sum_{h, k=1}^d \int_{\Rd}(A^{hk}D_k \uu,\uu)|\uu|_{\varepsilon}^{p-2}v^{p-2}D_h v dx\notag\\
&+\int_{\Rd}(V\uu,\uu)|\uu|_\varepsilon^{p-2}v^{p-1}dx.
\label{form-11}
\end{align}
We denote by $\Gamma_i$ $(i=1,\ldots.5)$ the last five terms in the right-hand side of \eqref{form-11} and estimate them.
Let us start from $\Gamma_1$ that, thanks to the Cauchy-Schwartz inequality \eqref{C-S_0}, the inequality \eqref{disDmod} and condition \eqref{ipov}(i), can be estimated as follows:
\begin{align*}
\Gamma_1 
\ge& -\frac{p-1}{2}(1+c_0)  \int_{\Rd}(Q \nabla|\uu|^2, \nabla |\uu|^2) ^{\frac{1}{2}}(Q \nabla v, \nabla v)^{\frac{1}{2}}|\uu|_\varepsilon^{p-2}v^{p-2}dx\\
\ge& - (p-1)(1+c_0)\gamma  \int_{\Rd}\bigg (\sum_{i=1}^m(Q \nabla u_i, \nabla u_i)\bigg )^{\frac{1}{2}}|\uu||\uu|_\varepsilon^{p-2}v^{p-\frac{1}{2}}dx\\
& - (p-1)(1+c_0)C_\gamma  \int_{\Rd}\bigg (\sum_{i=1}^m(Q \nabla u_i, \nabla u_i)\bigg )^{\frac{1}{2}}|\uu||\uu|_\varepsilon^{p-2}v^{p-2}dx\\
\ge &-(p-1)(1+c_0)\gamma\left(\int_{\Rd}v^p|\uu|^2|\uu|_\varepsilon^{p-2} dx\right)^{\frac{1}{2}}\bigg (\int_{\Rd} \sum_{i=1}^m(Q \nabla u_i, \nabla u_i)|\uu|_\varepsilon^{p-2}v^{p-1} dx\bigg )^{\frac{1}{2}}\\
&- (p-1)(1+c_0)C_\gamma  \left(\int_{\Rd}v^{p-3}|\uu|^2|\uu|_\varepsilon^{p-2} dx\right)^{\frac{1}{2}}\\
&\qquad\qquad\quad\;\,\times\bigg (\int_{\Rd} \sum_{i=1}^m(Q \nabla u_i, \nabla u_i)|\uu|_\varepsilon^{p-2}v^{p-1} dx\bigg )^{\frac{1}{2}}\\
\ge &-(p-1)(1+c_0)\left( \frac{\gamma}{4\varepsilon_0}+\frac{C_\gamma}{4v_0^{3}\varepsilon_1}\right)\int_{\Rd}v^p|\uu|^2|\uu|_\varepsilon^{p-2}dx\\
&- (p-1)(1+c_0)(\gamma\varepsilon_0+C_\gamma\varepsilon_1) \int_{\Rd}\sum_{i=1}^m(Q \nabla u_i, \nabla u_i)|\uu|_\varepsilon^{p-2}v^{p-1}dx
\end{align*}
for every $\varepsilon_0, \varepsilon_1>0$. Moreover, using the Cauchy-Schwarz inequality \eqref{C-S} and the inequality $|\uu|\le|\uu|_{\varepsilon}$, we can estimate 
\begin{align*}
\Gamma_2 
\ge& -\sum_{h, k=1}^d \int_{\Rd}|(A^{hk}D_k \uu, D_h \uu)||\uu|_{\varepsilon}^{p-2}v^{p-1}dx\\
\ge&  - 2\mathscr{C} \int_{\Rd}\sum_{i=1}^m(Q \nabla u_i, \nabla u_i)|\uu|_\varepsilon^{p-2}v^{p-1}dx\\[2mm]
\Gamma_3 
\ge &-\frac{|p-2|}{2}\sum_{h,k=1}^d\int_{\Rd}|(A^{hk}D_k \uu, \uu D_h |\uu|^2)||\uu|_{\varepsilon}^{p-4}v^{p-1}dx\\
\ge& -2\mathscr{C}|p-2|\int_{\Rd}\bigg (\sum_{i=1}^m (Q \nabla u_i, \nabla u_i)\bigg )^{\frac{1}{2}}(Q \nabla |\uu|^2 , \nabla |\uu|^2)^{\frac{1}{2}}|\uu||\uu|_{\varepsilon}^{p-4}v^{p-1}dx\\
\ge & -4\mathscr{C}|p-2|\int_{\Rd}\sum_{i=1}^m(Q \nabla u_i, \nabla u_i)|\uu|_{\varepsilon}^{p-2}v^{p-1}dx.
\end{align*}
Further, arguing as in the estimate of $\Gamma_1$, we get
\begin{align*}
\Gamma_4
\ge& -(p-1) \int_{\Rd}\Big|\sum_{h, k=1}^d(A^{hk}D_k\uu, \uu D_h v)\Big||\uu|_{\varepsilon}^{p-2}v^{p-2}dx\\
\ge & -2\mathscr{C}(p-1)\int_{\Rd}\bigg (\sum_{i=1}^m( Q\nabla u_i,\nabla u_i) \bigg)^{\frac{1}{2}}( Q \nabla v, \nabla v)^{\frac{1}{2}}|\uu||\uu|_\varepsilon^{p-2}v^{p-2}dx\\
\ge& -2\mathscr{C}(p-1)\gamma\int_{\Rd}\bigg (\sum_{i=1}^m(Q\nabla u_i,\nabla u_i) \bigg )^{\frac{1}{2}}|\uu||\uu|_\varepsilon^{p-2}v^{p-\frac{1}{2}}dx\\
&-2\mathscr{C}C_\gamma(p-1)\int_{\Rd}\bigg (\sum_{i=1}^m( Q\nabla u_i,\nabla u_i) \bigg )^{\frac{1}{2}}|\uu||\uu|_\varepsilon^{p-2}v^{p-2}dx\\
\ge &-\mathscr{C}(p-1)(\gamma \varepsilon_2+C_\gamma \varepsilon_3)\int_{\Rd}\sum_{i=1}^m(Q\nabla u_i,\nabla u_i)|\uu|_\varepsilon^{p-2}v^{p-1}dx\\
&-\mathscr{C}(p-1)\left(\frac{\gamma}{\varepsilon_2}+\frac{C_\gamma}{v_0^3\varepsilon_3}\right)\int_{\Rd}|\uu|^2|\uu|_\varepsilon^{p-2} v^pdx
\end{align*}
for every $\varepsilon_2, \varepsilon_3>0$.
Finally, using \eqref{ipov}(ii) it follows that
\begin{align*}
\Gamma_5=\int_{\Rd}(V\uu,\uu)|\uu|_\varepsilon^{p-2}v^{p-1}dx \ge \int_{\Rd}|\uu|^2|\uu|_\varepsilon^{p-2}v^pdx.
\end{align*}
Summing up, we have proved that
\begin{align}
\int_{\Rd}(\f,\uu)|\uu|_\varepsilon^{p-2}v^{p-1}dx \ge & \,\,\psi_1\sum_{i=1}^m \int_{\Rd}(Q \nabla u_i,\nabla u_i)|\uu|_{\varepsilon}^{p-2}v^{p-1}dx\notag\\
&+\frac{p-2}{4} \int_{\Rd}(Q\nabla|\uu|^2,\nabla|\uu|^2)|\uu|_{\varepsilon}^{p-4}v^{p-1}dx\notag\\
&+\psi_2\int_{\Rd}|\uu|^2|\uu|_\varepsilon^{p-2}v^pdx,
\label{previous}
\end{align}
where 
\begin{align*}\psi_1=&\psi_1(\varepsilon_0, \varepsilon_1, \varepsilon_2, \varepsilon_3)\\
=&\left[1-(p-1)(1+c_0)(\gamma\varepsilon_0+C_\gamma \varepsilon_1)-2\mathscr{C}-4\mathscr{C}|p-2|-\mathscr{C}(p-1)(\gamma \varepsilon_2+C_\gamma \varepsilon_3)\right]
\end{align*}
and
\begin{align}\label{f2}
\psi_2=&\psi_2(\varepsilon_0, \varepsilon_1, \varepsilon_2, \varepsilon_3)\notag\\
=&\left[1-(p-1)(1+c_0)\left( \frac{\gamma}{4\varepsilon_0}+\frac{C_\gamma}{4v_0^3\varepsilon_1}\right)-\mathscr{C}(p-1)\left(\frac{\gamma}{\varepsilon_2}+\frac{C_\gamma}{v_0^3\varepsilon_3}\right)\right].
\end{align}
Thus, combining \eqref{previous} with the estimate
\begin{eqnarray*}
\int_{\Rd}(\f,\uu)|\uu|_\varepsilon^{p-2}v^{p-1}dx \le 
\delta \int_{B(0,R)} |\uu|_\varepsilon^p v^pdx+ C(\delta, p)\int_{\Rd}|\f|^p dx,
\end{eqnarray*}
which holds true for any $\delta>0$ and some positive constant $C(\delta, p)$, where $R>0$ is such that ${\rm supp}(\f)\subset B(0,R)$, we deduce
\begin{align}\label{pre}
&\psi_1\sum_{i=1}^m \int_{\Rd}(Q \nabla u_i,\nabla u_i)|\uu|_{\varepsilon}^{p-2}v^{p-1}dx\notag\\
&+\frac{p-2}{4} \int_{\Rd}(Q \nabla|\uu|^2,\nabla|\uu|^2\rangle|\uu|_{\varepsilon}^{p-4}v^{p-1}dx\notag\\
&+\psi_2\int_{\Rd}|\uu|^2|\uu|_\varepsilon^{p-2}v^pdx - \delta \int_{B(0,R)} |\uu|_\varepsilon^p v^pdx\le  C(\delta, p)\int_{\Rd}|\f|^p dx.
\end{align}

Now, we distinguish the cases $p\ge 2$ and $p \in (1,2)$. In the first case, neglecting the second term in the left-hand side of the previous inequality, we deduce that
\begin{align*}
&\psi_1\sum_{i=1}^m \int_{\Rd}(Q \nabla u_i,\nabla u_i)|\uu|_{\varepsilon}^{p-2}v^{p-1}dx\notag\\
&+\psi_2\int_{\Rd}|\uu|^2|\uu|_\varepsilon^{p-2}v^pdx - \delta \int_{B(0,R)} |\uu|_\varepsilon^pv^p dx\le  C(\delta, p)\int_{\Rd}|\f|^p dx.
\end{align*}
On the other hand, if $p<2$ then starting from \eqref{pre} and using \eqref{disDmod} we deduce that
\begin{align*}
&(\psi_1+p-2)\sum_{i=1}^m \int_{\Rd}(Q \nabla u_i,\nabla u_i)|\uu|_{\varepsilon}^{p-2}v^{p-1}dx\notag\\
&+\psi_2\int_{\Rd}v^p|\uu|^2|\uu|_\varepsilon^{p-2}dx - \delta \int_{B(0,R)} v^p|\uu|_\varepsilon^p dx\le  C(\delta, p)\int_{\Rd}|\f|^p dx.
\end{align*}
Now, computing the supremum of the function $\psi_2$ in the set $\Omega=\{(\varepsilon_0, \varepsilon_1, \varepsilon_2, \varepsilon_3)\in (0,+\infty)^4: \psi_1(\varepsilon_0, \varepsilon_1, \varepsilon_2, \varepsilon_3)\ge 0\}$, if $p \ge 2$, and on the set 
$\tilde\Omega=\{(\varepsilon_0, \varepsilon_1, \varepsilon_2, \varepsilon_3)\in (0,+\infty)^4: \psi_1(\varepsilon_0, \varepsilon_1, \varepsilon_2, \varepsilon_3)+p-2 \ge 0\}$, if $p\in (1,2)$, we get
\begin{equation*}\Lambda_p\int_{\Rd}v^p|\uu|^2|\uu|_\varepsilon^{p-2}dx - \delta \int_{B(0,R)} v^p|\uu|_\varepsilon^p dx\le  C(\delta, p)\int_{\Rd}|\f|^p dx
\end{equation*}
(We refer the reader to Appendix \ref{app-B} for further details). Due to the positivity of $\Lambda_p$ (see its definition in \eqref{ipo_p}), letting $\varepsilon$ tend to $0$ and choosing $\delta$ small enough, we get \eqref{vu}, completing the proof.
\end{proof}

Now, invoking \cite[Theorem 5.9]{goldstein} we prove our main generation result which provides also a domain characterization. We recall that $\A_0$ is the operator defined in Remark \ref{esami}(i) by
$\A_0=\sum_{h,k=1}^dD_h(Q^{hk}D_k)$ and $\A_{0,p}$ is its realization in $L^p(\Rd;\mathbb C^m)$.

\begin{thm}
Let Hypotheses $\ref{hyp_0}$ be satisfied with  $\mathscr C\in (0,1/2)$. Further, assume that conditions \eqref{ipo_core}, \eqref{ipo_ana} and \eqref{ipov} are satisfied and that there exists a positive constant $c_1$ such that $|V(x)\xi|\le c_1 v(x)|\xi|$ for every $x \in \Rd$ and $\xi \in \Rm$. Then, 
for every 
$p\in \left (1+\frac{6\mathscr C}{4\mathscr C+1},\frac{3}{2}+\frac{1}{4\mathscr C}\right )$, which satisfies \eqref{ipo_p}, 
it holds that 
\begin{eqnarray*}
D_{p,\rm{max}}(\A)=\{\uu \in W^{2,p}_{\rm loc}(\Rd;\Cm): v\uu, \A_0\uu \in L^p(\Rd; \Cm)\}=:D_p.
\end{eqnarray*}
Consequently, $({\bf A}_p, D_p)$ generates an analytic contraction semigroup in $L^p(\Rd;\Cm)$.
\end{thm}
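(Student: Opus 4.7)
The plan is to establish the domain identity $D_{p,\rm max}(\A) = D_p$ by a double inclusion, after which generation of an analytic contraction semigroup on $D_p$ follows immediately from Theorems \ref{mainth} and \ref{sect_pro}, which already yield an analytic contraction semigroup whose domain is $D_{p,\rm max}(\A)$.

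The inclusion $D_p \subset D_{p,\rm max}(\A)$ is routine. Given $\uu \in D_p$, the positive infimum $v_0$ of $v$ combined with $v\uu \in L^p(\Rd;\Cm)$ gives $\uu \in L^p(\Rd;\Cm)$; the pointwise bound $|V\uu|\le c_1 v|\uu|$ yields $V\uu \in L^p(\Rd;\Cm)$; and the decomposition $\A\uu = \A_0\uu - V\uu$ then places $\A\uu$ in $L^p(\Rd;\Cm)$. Together with the membership $\uu\in W^{2,p}_{\rm loc}(\Rd;\Cm)$ built into $D_p$, this gives $\uu \in D_{p,\rm max}(\A)$.

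For the reverse inclusion I would exploit Proposition \ref{prop_vu} in combination with the core property of $C^\infty_c(\Rd;\Cm)$ established in Theorem \ref{mainth}. Fix $\uu \in D_{p,\rm max}(\A)$ and choose a sequence $(\uu_n)\subset C^\infty_c(\Rd;\Cm)$ with $\uu_n \to \uu$ and $\A\uu_n \to \A\uu$ in $L^p(\Rd;\Cm)$. Applying the a priori bound \eqref{vu} to the differences $\uu_n - \uu_m$ yields
\begin{equation*}
\|v(\uu_n-\uu_m)\|_p \le K\|\A(\uu_n-\uu_m)\|_p,
\end{equation*}
so $(v\uu_n)$ is Cauchy in $L^p(\Rd;\Cm)$; extracting an a.e.-convergent subsequence of $(\uu_n)$ identifies the $L^p$-limit with $v\uu$, whence $v\uu \in L^p(\Rd;\Cm)$. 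The pointwise domination $|V\uu_n - V\uu|\le c_1 |v\uu_n - v\uu|$ then forces $V\uu_n \to V\uu$ in $L^p(\Rd;\Cm)$, and passing to the limit in the equality $\A_0\uu_n = \A\uu_n + V\uu_n$ (in $L^p$, hence distributionally, using that $\uu\in W^{2,p}_{\rm loc}$) yields $\A_0\uu \in L^p(\Rd;\Cm)$. Thus $\uu \in D_p$.

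The main technical obstacle is that Proposition \ref{prop_vu} is stated and proved for $\Rm$-valued test functions, while the core argument above applies it to $\Cm$-valued ones. A direct inspection of its proof shows that all the relevant computations rely only on Hypotheses \ref{hyp_0} and the complex Cauchy-Schwarz inequalities \eqref{C-S_0}--\eqref{C-S}, so the estimate \eqref{vu} transfers verbatim to $\Cm$-valued functions; alternatively one may decompose $\uu = \uu_1 + i \uu_2$ into real and imaginary parts and apply the real estimate componentwise. With the domain identity $D_{p,\rm max}(\A) = D_p$ secured, Theorem \ref{sect_pro} upgrades the contraction semigroup generated by ${\bf A}_p$ to an analytic one, completing the proof.
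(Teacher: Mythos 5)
Your proof is correct and takes essentially the same route as the paper: the inclusion $D_{p,\rm max}(\A)\subset D_p$ is obtained by taking a core sequence from Theorem \ref{mainth}, applying the a priori estimate \eqref{vu} of Proposition \ref{prop_vu} to the differences, and passing to the limit. The paper packages the limit-identification step differently — it first checks that $(D_p,\|\cdot\|_{D_p})$ is a Banach space (via closedness of $\A_{0,p}$ and of multiplication by $v$) and proves the two-sided norm equivalence \eqref{est_li}, then concludes the core sequence is Cauchy in $D_p$-norm — whereas you identify the $L^p$-limits of $v\uu_n$ and $\A_0\uu_n$ inline, which amounts to invoking the same closedness facts directly; you also correctly flag and dispose of the $\Rm$-versus-$\Cm$ issue in Proposition \ref{prop_vu}.
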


\begin{proof} 
First of all let us observe that all the assumptions in Theorems \ref{mainth}, \ref{sect_pro} and Proposition \ref{prop_vu} are satisfied; hence all the results therein hold true.

Now, let us fix $p \in (1,\infty)$ and observe that, if we endow $D_p$ with the norm $\|\uu\|_{D_p}:=\|v\uu\|_{L^p(\Rd;\Cm)}+\|\A_0 \uu\|_{L^p(\Rd;\Cm)}$, then $(D_p, \|\cdot\|_{D_p})$ is a Banach space.
Indeed, let $(\uu_n)$ be a Cauchy sequence in $D_p$. Since the infimum over $\Rd$ of $v$ is strictly positive, $(\uu_n)$ and $(\A_0\uu_n)$ are Cauchy sequences in $L^p(\Rd;\Cm)$. In addition, since $D_p\subset D_{p,\max}(\A_0)$ and $(\A_{0,p}, D_{p,\max}(\A_0))$ is closed in $L^p(\Rd;\Cm)$ (see Remark \ref{esami}(i)), it follows that there exists $\uu \in D_{p,\rm{max}}(\A_0)$ such that $\uu_n$ and $\A_0 \uu_n$ converge to $\uu$ and $\A_0 \uu$ in $L^p(\Rd;\Cm)$, respectively. The closedness of the multiplication operator $\uu\mapsto v\uu$ allows us to conclude that $v\uu_n$ converges to $v\uu$ in $L^p(\Rd;\Cm)$ and, consequently, that $\uu$ belongs to $D_p$.

Denote by $\A_p$ the realization of $\A$ in $L^p(\Rd;\mathbb{C}^m)$ with domain $D_p$. In view of \cite[Theorem 5.9]{goldstein} and since $C^{\infty}_c(\Rd,\Cm)$ is dense in $L^p(\Rd;\Cm)$, we only need to prove that
$1 \in \rho(\A_p)$.
To complete the proof, we first show that there exist two positive  constants $M_1, M_2$ such that
\begin{equation}\label{est_li}
M_1\|\uu\|_{D_p} \le \|\A\uu-\uu\|_{L^p(\Rd;\Cm)}\le M_2\|\uu\|_{D_p}, \qquad\;\, \uu \in C^\infty_c(\Rd;\Cm).
\end{equation}
To prove \eqref{est_li} we can limit ourselves to considering functions which take values in $\Rm$. Fix $\uu\in C^{\infty}_c(\Rd;\Rm)$. Using the assumption on $V$, we can estimate $\|\A_0\uu\|_p\le\|\A\uu\|_p+\|V\uu\|_p\le
\|\A\uu\|_p+c_1\|v\uu\|_p$. Hence, thanks to \eqref{vu} and the previous estimate, we get
\begin{align*}
\|\uu\|_{D_p}=&\|v\uu\|_p+\|\A_0\uu\|_p
\le[1+(c_1+1)K]\|\A\uu\|_p.
\end{align*}
Thus, using the $L^p$-dissipativity of the operator $\A$ (see Lemma \ref{lp_diss}), which implies that $\|\uu\|_p \le \|\uu-\A\uu\|_p$ for any $\uu \in C^\infty_c(\Rd;\R^m)$, we deduce that
\begin{eqnarray*}
\|\uu\|_{D_p} \le [1+(c_1+1)K](\|\A\uu-\uu\|_p+ \|\uu\|_p)\le 2[1+(c_1+1)K]\|\A\uu-\uu\|_p.
\end{eqnarray*}
Hence, the first inequality in \eqref{est_li} follows with $ M_1= [2(1+K+c_1K)]^{-1}$. On the other hand, using again the assumption on $V$, we get
\begin{align*}
\|\A\uu-\uu\|_p\le & \|\A_0\uu\|_p+\|V\uu\|_p+\|\uu\|_p\\
\le & \|\A_0\uu\|_p+c_1\|v\uu\|_p+ v_0^{-1}\|v\uu\|_p\\
\le &(1+c_1+v_0^{-1} )\|\uu\|_{D_p},
\end{align*}
where $v_0$ is the positive infimum over $\Rd$ of $v$.
Hence, the second inequality in \eqref{est_li} holds true with $M_2=1+c_1+v_0^{-1}$.

Using \eqref{est_li} we can now prove that
\begin{eqnarray*}
D_p= D_{p,\rm{max}}(\A)=\{\uu \in W^{2,p}_{\rm loc}(\Rd;\Cm)\cap L^p(\Rd;\Cm): \, \A\uu \in L^p(\Rd;\Cm)\}.
\end{eqnarray*}

Clearly, $D_p$ is contained in $D_{p,\rm{max}}(\A)$. To prove the other inclusion, let us fix $\uu \in D_{p,\rm{max}}(\A)$. Since $C^\infty_c(\Rd;\Cm)$ is a core of $({\bf A}_p,D_{p,\rm{max}}(\A))$, there exists a sequence $(\uu_n)\subset C^\infty_c(\Rd;\Cm)$ such that $\uu_n$ converges to $\uu$ and $\A\uu_n$ converges to $\A\uu$ in $L^p(\Rd;\Cm)$, as $n$ tends to $\infty$. From \eqref{est_li} we deduce that $(\uu_n)$ is a Cauchy sequence in $(D_p, \|\cdot\|_{D_p})$ and, since  $(D_p, \|\cdot\|_{D_p})$ is a Banach space, we conclude that $\uu$ belongs to $D_p$ showing that $D_{p,\rm{max}}(\A)\subset D_p$. Now, Theorem \ref{mainth} yields the claim. 
\end{proof}

\section{Examples}
\label{sect-5}
In this section we provide classes of operators to which our results can be applied. Recall that the matrix-valued functions $Q^{hk}$  ($h,k=1, \ldots,d$) can be written as $q_{hk}I+A^{hk}$ where $q_{hk}:\Rd \to \R$ and $A^{hk}$ are $m\times m$ matrix-valued functions. In the following examples, we assume that $q_{hk}, a^{hk}_{ij} \in C^1(\Rd)$, $v_{ij} \in L^\infty_{\rm loc}(\Rd)$, for any $i,j=1, \ldots, m$, $h,k=1, \ldots,d$, and that ${\rm Re}(V(x)\zeta,\zeta)\ge 0$ for any $x \in \Rd$ and $\zeta \in \mathbb{C}^m$.

We also recall that the main assumptions required on the $d\times d$ matrix-valued functions $Q=(q_{hk})_{h, k=1}^d$ are the following ones:
\begin{equation}\label{cond}
(i)\,\,{\rm Re}\,(Q(x)\xi, \xi) >0,\quad \quad\quad (ii)\,\,|({\rm Im}(Q(x)\xi,\xi)|\le c_0{\rm Re}(Q(x)\xi,\xi)
\end{equation}
for any $x \in \Rd$, $\xi \in  \mathbb C^d\setminus\{0\}$ and some positive constant $c_0$.
Conditions \eqref{cond} are satisfied, for instance, when
\begin{enumerate}[\rm(1)]
\item $Q(x)$ is a positive definite and symmetric real-valued matrix for any $x \in \Rd$ and $\inf_{x \in \Rd}\lambda_Q(x)>0$, where $\lambda_Q(x)$ denotes the minimum eigenvalue of $Q(x)$;
\item $Q(x)$ is a diagonal perturbation of an antisymmetric matrix-valued function for any $x \in \Rd$, i.e., 
\begin{eqnarray*}
Q(x)={\rm diag}(q_{11}(x), \ldots, q_{dd}(x))+ Q_0(x),\qquad\;\, x \in \Rd,
\end{eqnarray*}
where $Q_0(x)$ is an antisymmetric matrix for any $x \in \Rd$, and there exist positive constants $k_1,k_2$ such that
\begin{equation*}
\inf_{x \in \Rd}q_{ii}(x)>k_1,\quad\;\, i=1, \ldots,d,
\end{equation*}
and
\begin{equation*}
\sum_{j\in\{1,\ldots,d\}\setminus\{i\}}|q_{ij}(x)|\le  k_2q_{ii}(x),\qquad\;\,  i\in \{1, \ldots, d\},\,\,  x \in \Rd.
\end{equation*}
\end{enumerate}

In the latter case
\begin{equation}\label{real_part}
{\rm Re}(Q(x)\zeta,\zeta)= \sum_{i=1}^dq_{ii}(x)|\zeta_i|^2>k_1|\zeta|^2
\end{equation}
and
\begin{equation*}
{\rm Im} (Q(x)\zeta,\zeta)=\sum_{i=1}^d\sum_{j\in\{1,\ldots,d\}\setminus\{i\}}q_{ij}(x)\zeta_i\overline{\zeta}_j
\end{equation*}
for every $x\in\Rd$ and $\zeta=(\zeta_1,\ldots,\zeta_d)\in\mathbb C^d$.
Thus, it follows that
\begin{align*}
|{\rm Im} (Q(x)\zeta,\zeta)|
& \le \frac{1}{2}\sum_{i=1}^d\sum_{j\in\{1,\ldots,d\}\setminus\{i\}}|q_{ij}(x)|(|\zeta_i|^2+|\zeta_j|^2)\\
&= \sum_{i=1}^d|\zeta_i|^2\sum_{j\in\{1,\ldots,d\}\setminus\{i\}}|q_{ij}(x)|\\
&\le k_2\re (Q(x)\zeta,\zeta)
\end{align*}
for every $\zeta=(\zeta_1,\ldots,\zeta_d)\in\mathbb C^d$ and $x\in\Rd$,
whence condition \eqref{cond}(ii) is satisfied with $c_0=k_2$.

\subsection{The symmetric case I}
Here, we assume that $Q$ is as described in (1). Clearly, condition \eqref{cond}(i) is trivially satisfied as well as condition \eqref{cond}(ii) holds true with $c_0=0$.
Concerning the matrices $A^{hk}$, assume that 
\begin{equation}\label{cond_i}\sum_{h,k=1}^d(A^{hk}(x) \vartheta^k,\vartheta^h)\ge 0,\qquad\;\, x \in \Rd,
\end{equation}
for any $\vartheta^k, \vartheta^h\in \R^m$
 and that there exists a positive constant $k_0$ such that
\begin{equation}\label{cond_ii}
|a^{hk}_{ij}(x)|\le k_0 \lambda_Q(x), \qquad \;\, x\in \Rd,
\end{equation}
for any $i,j=1, \ldots, m$ and $h,k=1, \ldots,d$. Under these assumptions, we can prove that conditions \eqref{realLegendrefunc} and \eqref{comLeg} are both satisfied with $\mathscr{C}=mdk_0 $. Indeed by \eqref{cond_i} we get immediately that
\begin{align*} 
{\rm Re}\sum_{h,k=1}^d(A^{hk}(x)\vartheta^k,\vartheta^h)\ge 0, \qquad\,\, \vartheta^k, \vartheta^h \in \mathbb{C}^m,
\end{align*}
for every $x\in\Rd$,
proving the first inequality in \eqref{realLegendrefunc}. 
Further, using \eqref{cond_ii} we can estimate
\begin{align}\label{real_1}
{\rm Re}\sum_{h,k=1}^d(A^{hk}(x)\vartheta^k,\vartheta^h) 
&\le  \sum_{h, k=1}^d\sum_{i,j=1}^m  |a^{hk}_{ij}(x)||\vartheta^k_j| |\vartheta^h_i|\notag\\
& \le k_0\lambda_Q(x)\bigg(\sum_{k=1}^d\sum_{i=1}^m|\vartheta^k_i|\bigg)^2\notag\\
& \le md k_0  \lambda_Q(x)\sum_{k=1}^d\sum_{i=1}^m|\vartheta^k_i|^2\notag\\
& \le mdk_0 {\rm Re}\sum_{i=1}^m\sum_{h,k=1}^d(q_{hk}(x)\vartheta_i^k, \vartheta_i^h)
\end{align}
for any $\vartheta^k=(\vartheta^k_1,\ldots, \vartheta^k_m), \vartheta^h=(\vartheta^h_1,\ldots, \vartheta^h_m) \in \Cm$ and $x \in \Rd$, and the second part of \eqref{realLegendrefunc} follows.
using again \eqref{cond_ii}, we can estimate
\begin{align}
&\bigg |{\rm  Im}\sum_{h,k=1}^d(A^{hk}(x) \vartheta^k,\vartheta^h)\bigg |\notag\\
\le &   
 \sum_{h, k=1}^d\sum_{i,j=1}^m |a^{hk}_{ij}|\big(|{\rm Im}\,\vartheta^k_j||{\rm Re}\,\vartheta_i^h|+ |{\rm Re}\,\vartheta^k_j|{\rm Im}\,\vartheta^h_i|
\big)\notag\\
\le &\frac{1}{2}  \sum_{h,k=1}^d\sum_{i,j=1}^m |a^{hk}_{ij}|\big(|{\rm Im}\,\vartheta^k_j|^2+|{\rm Re}\,\vartheta_i^h|^2+ |{\rm Re}\,\vartheta^k_j|^2+{\rm Im}\vartheta^h_i|^2
\big)\notag\\
=& mdk_0 \lambda_Q(x)\sum_{k=1}^d\sum_{i=1}^m[({\rm Re}\,\vartheta^k_i)^2+({\rm Im}\,\vartheta^k_i)^2]\notag\\
\le &mdk_0\sum_{i=1}^m\sum_{h,k=1}^dq_{hk}[{\rm Re}\,\vartheta^k_i{\rm Re}\,\vartheta^h_i+{\rm Im}\,\vartheta^k_i{\rm Im}\,\vartheta^h_i]\notag\\
= &md k_0 {\rm Re}\,\sum_{i=1}^m(Q(x)\vartheta_i, \vartheta_i),
\label{im_1}
\end{align}
whence also condition \eqref{comLeg} is satisfied with $\mathscr{C}=mdk_0$. Further, if $2mdk_0<1$, then also the results in Section 4 apply.

\subsection{The symmetric case II}
Here, we assume again that $Q$ is as described in (1) and that the $m\times m$ matrices $A^{hk}$ have the form 
\begin{align*}
A^{hk}=q_{hk}G,\qquad\;\, h,k=1, \ldots,d,
\end{align*}
where $G$ is a $m\times m$ matrix-valued function having nonnegative and bounded entries $g_{ij}:\Rd \to \R$  and
$(G(x)\xi, \xi)\ge 0$ for any $x \in \Rd$ and $\xi \in \R^m.$
In this case there exists a positive constant $\Lambda_G$ such that $(G(x)\xi,\xi)\le \Lambda_G|\xi|^2$ for any $\xi \in \R^m$.
In order to check conditions \eqref{realLegendrefunc} and \eqref{comLeg}, we observe that 
\begin{eqnarray*}
\sum_{h,k=1}^d(A^{hk}\theta^k,\theta^h) = \sum_{h,k=1}^d\sum_{i,j=1}^m q_{hk}g_{ij}\theta^k_i\overline\theta^h_j= \sum_{i,j=1}^m g_{ij} (Q \theta_i,\theta_j)
\end{eqnarray*}
for any $\theta^1,\ldots,\theta^d\in\Cm$,
where by $\theta_i$ we denote the vector in $\mathbb C^d$ having coordinates $(\theta_i^1, \ldots, \theta_i^d)$ for any $i \in \{1, \ldots,m\}$. Thus, we obtain that
\begin{align*}
\re \sum_{h,k=1}^d(A^{hk}\theta^k,\theta^h)
=&\sum_{i,j=1}^mg_{ij}\big((Q^{1/2}\re \theta_i, Q^{1/2}\re \theta_j)+(Q^{1/2}\im \theta_i, Q^{1/2}\im \theta_j)\big ),
\end{align*}
which is nonnegative by the assumption on the matrix-valued function $G$. Moreover, since for any $\xi \in \R^m$
\begin{align*}
\sum_{i,j=1}^m g_{ij}(Q^{1/2}\xi_i, Q^{1/2}\xi_j)
= \sum_{\ell=1}^d (G(x)\eta^{\ell},\eta^{\ell})
\le \Lambda_G  \sum_{\ell=1}^d |\eta^{\ell}|^2,
\end{align*}
where $\eta^{\ell}=(\eta_1^{\ell},\ldots,\eta_m^{\ell})$ and $\eta_i^{\ell}=(Q^{1/2}\xi_i)_{\ell}$ for any $i=1, \ldots, m$ and ${\ell}=1, \ldots, d$, we conclude that
\begin{align*}
\re\!\!\sum_{h,k=1}^d(A^{hk}\theta^k,\theta^h)
&\le \Lambda_G  \bigg (\sum_{\ell=1}^d\sum_{i=1}^m (Q^{1/2}\re \theta_i)_{\ell}^2+(Q^{1/2}\im \theta_i)_{\ell}^2\bigg )\\
&=\Lambda_G \re\sum_{i=1}^m\sum_{h,k=1}^d q_{hk}\theta^k_i\overline\theta^h_i
\end{align*}
Analogously, taking into account that $g_{ij}$ are nonnegative functions, we can estimate
\begin{align*}
\left|\im \sum_{h,k=1}^d(A^{hk}\theta^k,\theta^h)\right|
&\leq \sum_{i,j=1}^m g_{ij}|\im (Q\theta_i, \theta_j)|\\
& \le \sum_{i,j=1}^m g_{ij}|Q^{1/2}\theta_i||Q^{1/2}\theta_j|\\
& \le \Lambda_G \sum_{i=1}^m|Q^{1/2}\theta_i|^2\\
&= \Lambda_G \sum_{i=1}^m (Q\theta_i,\theta_i),
\end{align*}
whence conditions \eqref{realLegendrefunc} and \eqref{comLeg} are both satisfied with $\mathscr{C}=\Lambda_G$.

Finally, we consider the case when $Q$ is as described in $(2)$, i.e., $Q$ is a diagonal perturbation of an antysimmetric
matrix. 
We show how the form of the matrix $Q$, in this case, allows us to require slightly weaker assumptions on the entries of the matrices $A^{hh}$.

\subsection{Diagonal perturbation of the antysimmetric case}
Here, we suppose that $Q$ is as described in (2) and, concerning the entries of the matrices $A^{hk}$, besides \eqref{cond_i}, we assume the following conditions:
\begin{enumerate}[\rm (i)]
    \item $0 \le a^{hh}_{ij}\le k_2 q_{hh}$ for any $i,j=1, \ldots, m$, $h=1, \ldots,d$ and some positive constant $k_2$;
\item there exists a positive constant $k_3$ such that
$|a^{hk}_{ij}|\le k_3 \min_{r=1,\ldots, m}q_{rr}$ for any $i,j=1\ldots, m$ and $h,k=1,\ldots,d$ with $h \neq k$.
\end{enumerate}
In this case, arguing as in \eqref{real_1} for the terms $(A^{hk}\vartheta^k, \vartheta^h)$ when $h \neq k$, we can estimate
\begin{align*}
\sum_{h,k=1}^d\sum_{i,j=1}^m a^{hk}_{ij}(x)\zeta^k_j \zeta^h_i&= \sum_{h=1}^d\sum_{i,j=1}^m a^{hh}_{ij}(x)\zeta^h_j \zeta^h_i+ \sum_{h\neq k}\sum_{i,j=1}^m a^{hk}_{ij}(x)\zeta^k_j \zeta^h_i\\
&\le \frac{1}{2}\sum_{h=1}^d\sum_{i,j=1}^m a^{hh}_{ij}[(\zeta^h_j)^2+(\zeta^h_i)^2]+mdk_3\sum_{i=1}^m\sum_{h=1}^dq_{hh}(\zeta_i^h)^2\\
& \le mk_2\sum_{h=1}^d\sum_{i=1}^m q_{hh}(\zeta^h_i)^2+ mdk_3\sum_{i=1}^m q_{hh}(\zeta^h_i)^2\\
&\le (k_2+dk_3)\sum_{i=1}^m\sum_{h,k=1}^dq_{hk}\zeta_i^k\zeta_i^h
\end{align*}
for any $\zeta^k=(\zeta^k_1,\ldots, \vartheta^k_m), \zeta^h=(\zeta^h_1,\ldots, \zeta^h_m) \in \R^m$, where in the last equality we used \eqref{real_part}. Applying this chain of inequalities first with $\zeta^h={\rm Re}\,\vartheta^h$ ($h=1,\ldots,d$) and then with 
$\zeta^h={\rm Im}\,\vartheta^h$ ($h=1,\ldots,d$) and
$\vartheta^1,\ldots,\vartheta^d\in\mathbb C^m$, we conclude that
\begin{align*}
{\rm Re}\sum_{h,k=1}^d(A^{hk}\vartheta^k,\vartheta^h)
\le m(k_2+dk_3)\sum_{i=1}^m\sum_{h,k=1}^dq_{hk}\vartheta_i^k\overline\vartheta_i^h
\end{align*}
for every $\vartheta^1,\ldots,\vartheta^d\in\mathbb C^m$.

Analogously, repeating the same arguments in the proof of \eqref{im_1}, for the terms $(A^{hk}\vartheta^k, \vartheta^h)$ with $h \neq k$, we get
\begin{align*}
\bigg |{\rm  Im}\sum_{h,k=1}^d(A^{hk} \vartheta^k,\vartheta^h)\bigg |
\le &\sum_{h=1}^d\sum_{i,j=1}^m a^{hh}_{ij}\left|\big({\rm Im}\,\vartheta^h_j{\rm Re}\,\vartheta_i^h- {\rm Re}\,\vartheta^h_j{\rm Im}\,\vartheta^h_i
\big)\right|\\
&+mdk_3\sum_{i=1}^m\sum_{h=1}^dq_{hh}(\vartheta_i^h)^2\\
\le &\frac{1}{2}\sum_{h=1}^d\sum_{i,j=1}^m a^{hh}_{ij}(|\vartheta^h_j|^2+|\vartheta_i^h|^2)+mdk_3\sum_{i=1}^m\sum_{h=1}^dq_{hh}(\vartheta_i^h)^2\\
\le &\frac{k_2}{2}\sum_{h=1}^d q_{hh}\sum_{i,j=1}^m (|\vartheta^h_j|^2+|\vartheta_i^h|^2)+mdk_3\sum_{i=1}^m\sum_{h=1}^dq_{hh}(\vartheta_i^h)^2\\
= &m(k_2+dk_3){\rm Re}\sum_{h,k=1}^d\sum_{i=1}^m q_{hk}\vartheta^k_i \overline{\vartheta^h_i},
\end{align*}
where, in the last equality we used \eqref{real_part}. Thus, conditions \eqref{realLegendrefunc} and \eqref{comLeg} are both satisfied with $\mathscr{C}=md(k_2+k_3)$. Further, if $md(k_2+k_3)<1/2$, then also the results in Section 4 apply.

\appendix

\section{Auxiliary results from Linear Algebra}

For every $h,k=1,\ldots, d$, let $A^{hk}$ be a $m\times m$ real-valued matrix, with entries $a^{hk}_{ij}$, such that
\begin{equation}
a^{hk}_{ij}=a^{kh}_{ji}, \qquad\;\, h,k=1, \dots, d,\;\,i,j=1, \dots m. 
\label{sym} 
\end{equation}
Then, $\sum_{h,k=1}^d (A^{hk}\theta^k, \theta^h)\in\R$
for every $\theta^1,\ldots\theta^d \in \mathbb C^m$.

Further, if we assume the positivity condition 
$\sum_{h,k=1}^d (A^{hk}\theta^k, \theta^h)\geq 0$ for every $\theta^1,\ldots,\theta^d\in \Cm$,
then the Cauchy Schwarz inequality
\begin{equation}\label{CS}\bigg |\sum_{h,k=1}^d (A^{hk}\theta^k, \eta^h)\bigg | \leq
\bigg(\sum_{h,k=1}^d (A^{hk}\theta^k, \theta^h)\bigg)^{\frac 1 2} \bigg(\sum_{h,k=1}^d (A^{hk}\eta^k, \eta^h)\bigg)^{\frac 1 2} \end{equation}
holds true for every $\theta^1,\ldots,\theta^d\in \Cm$.

On the other hand, if the matrices $A^{hk}$ satisfy the condition
\begin{equation}
a^{hk}_{ij}=-a^{kh}_{ji}, \qquad\;\, h,k=1, \dots, d,\;\,
 i,j=1, \dots m, 
\label{antisym} 
\end{equation}
then $\displaystyle\sum_{h,k=1}^d (A^{hk}\theta^k, \theta^h)\in i\R$.

More generally,  every real matrix $A^{hk}$ can be split  into the sum $A^{hk}=A^{hk}_s+A^{hk}_{as}$, where
\begin{eqnarray*}
A_s^{hk}=\frac{A^{hk}+(A^{kh})^t}{2} , \qquad A_{as}^{hk}=
\frac{A^{hk}-(A^{kh})^t}{2}.
\end{eqnarray*}
It is clear that $A^{hk}_s$ satisfies condition \eqref{sym} and $(A_{as}^{hk})_{h,k=1, \dots, d}$ satisfies condition \eqref{antisym}.
Moreover, 
  \begin{align}
  &{\rm Re} \sum_{h,k=1}^d (A^{hk}\theta^k, \theta^h) = \sum_{h,k=1}^d (A_s^{hk}\theta^k, \theta^h),\notag\\
  &{\rm Im} \sum_{h,k=1}^d (A^{hk}\theta^k, \theta^h) =\frac{1}{i}\sum_{h,k=1}^d (A_{as}^{hk}\theta^k, \theta^h)
  \label{imforma} 
  \end{align}
for every $\theta^1,\ldots,\theta^d\in\Cm$.

Next proposition yields a sort of mixed Cauchy-Schwarz inequality which is crucial in this paper.
Although it is based on  essentially known techniques, by sake of completeness,  we provide a sketch of the proof.

\begin{prop}\label{CSconf} 
For every $h,k=1.\ldots$, let $A^{hk}$  and $M^{hk}$ be two $m\times m$  real-valued matrices such that
 \begin{align}\label{RA}
& \bigg |{\rm Im}\sum_{h,k=1}^d (A^{hk}\theta^k, \theta^h)\bigg |\leq C_0 {\rm Re}\sum_{h,k=1}^d (M^{hk}\theta^k, \theta^h)
  \end{align}
for some positive constant $C_0$ and every $\theta^1,\ldots,\theta^d\in \Cm$. Then, the inequality
  \begin{equation}
  \label{modifCS1}
  \bigg |\sum_{h,k=1}^d (A_{as}^{hk}\theta^k, \eta^h)\bigg |\leq C_0
\bigg( {\rm Re}\sum_{h,k=1}^d (M^{hk}\theta^k, \theta^h)\bigg)^{\frac 1 2} \bigg( {\rm Re}\sum_{h,k=1}^d (M^{hk}\eta^k, \eta^h)\bigg)^{\frac 1 2} \end{equation}
holds true for every $\theta^1,\ldots,\theta^d$ and $\eta^1,\ldots,\eta^d$ in $\Cm$.

If, in addition, 
\begin{equation}
0\leq {\rm Re}\sum_{h,k=1}^d (A^{hk}\theta^k, \theta^h)\leq C_1{\rm Re}\sum_{h,k=1}^d (M^{hk}\theta^k, \theta^h)
\label{ondulato}
\end{equation}
for some positive constant $C_1$ and every $\theta^1,\ldots.\theta^d\in \Cm$,
then \eqref{modifCS1} holds true with the matrices $A^{hk}_{as}$ being replaced by $A^{hk}$ and with the constant $C_0$ being replaced by the constant $C_0+C_1$.
\end{prop}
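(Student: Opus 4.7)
My plan is to reduce the stated inequality \eqref{modifCS1} to the classical Cauchy--Schwarz inequality for a positive semi-definite Hermitian sesquilinear form on the finite-dimensional complex vector space $(\Cm)^d$. First, I would introduce the Hermitian form
\[
G(\theta,\eta):=\sum_{h,k=1}^d\bigg(\frac{M^{hk}+(M^{kh})^T}{2}\theta^k,\eta^h\bigg),
\]
which on the diagonal coincides with ${\rm Re}\sum_{h,k}(M^{hk}\theta^k,\theta^h)$ and is therefore positive semi-definite, since \eqref{RA} forces this expression to be non-negative. Next, setting $B(\theta,\eta):=\sum_{h,k}(A_{as}^{hk}\theta^k,\eta^h)$, the antisymmetry $a^{hk,as}_{ij}=-a^{kh,as}_{ji}$, combined with the same index-relabelling argument that underlies \eqref{imforma}, yields $B(\theta,\eta)=-\overline{B(\eta,\theta)}$; hence $B$ is skew-Hermitian and $H:=iB$ is Hermitian. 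By \eqref{imforma}, $H(\theta,\theta)=-{\rm Im}\sum_{h,k}(A^{hk}\theta^k,\theta^h)$, so hypothesis \eqref{RA} becomes $|H(\theta,\theta)|\leq C_0 G(\theta,\theta)$.

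The crucial ingredient will then be the following linear-algebraic lemma: if $H$ is a Hermitian sesquilinear form and $G$ a positive semi-definite Hermitian sesquilinear form on a finite-dimensional complex vector space, and $|H(x,x)|\leq C\, G(x,x)$ for every $x$, then $|H(x,y)|\leq C\, G(x,x)^{1/2}G(y,y)^{1/2}$. To prove it I would first check that $H$ vanishes on the null space of $G$ (this follows from the diagonal bound together with the Cauchy--Schwarz inequality for $G$), so that upon quotienting one may assume that $G$ is an inner product. By the Riesz representation theorem, $H$ is then represented by a self-adjoint operator $T$ via $H(x,y)=G(Tx,y)$, and the diagonal hypothesis says exactly that the numerical radius of $T$ is at most $C$. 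Since for a self-adjoint operator on an inner product space the numerical radius coincides with the operator norm, one has $\|T\|\leq C$, and Cauchy--Schwarz for $G$ gives
\[
|H(x,y)|=|G(Tx,y)|\leq \|T\|\,G(x,x)^{1/2}G(y,y)^{1/2}\leq C\,G(x,x)^{1/2}G(y,y)^{1/2}.
\]
Applying this with $H=iB$ and recalling that $|B(\theta,\eta)|=|iB(\theta,\eta)|$ produces \eqref{modifCS1}.

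For the refined statement involving \eqref{ondulato}, I would decompose $A^{hk}=A_s^{hk}+A_{as}^{hk}$ and observe that the symmetric part defines a Hermitian sesquilinear form $B_s(\theta,\eta):=\sum_{h,k}(A_s^{hk}\theta^k,\eta^h)$ which, by \eqref{ondulato}, is positive semi-definite and satisfies $B_s(\theta,\theta)\leq C_1 G(\theta,\theta)$. The classical Cauchy--Schwarz inequality \eqref{CS}, applied directly to $B_s$, therefore produces $|B_s(\theta,\eta)|\leq C_1 G(\theta,\theta)^{1/2}G(\eta,\eta)^{1/2}$; adding this to the bound on $|B(\theta,\eta)|=|B_{as}(\theta,\eta)|$ coming from the first part via the triangle inequality yields the estimate with constant $C_0+C_1$. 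The only delicate point of the whole argument is to preserve the sharp constant $C_0$ in the key lemma: a naive polarization based on applying Cauchy--Schwarz to the positive semi-definite forms $C_0 G\pm H$ would produce only suboptimal constants such as $2C_0$ or $3C_0$, and the self-adjoint/spectral identification of the operator norm with the numerical radius is exactly what is needed to preserve the factor $C_0$ stated in \eqref{modifCS1}.
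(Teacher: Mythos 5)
Your argument is correct, but it takes a genuinely different route from the paper's. The paper proves \eqref{modifCS1} with elementary algebra: it writes $\sum_{h,k}(A^{hk}_{as}\theta^k,\eta^h)$ via a polarization identity as $\tfrac{i}{4}$ times the difference of two imaginary parts, bounds each by $C_0$ times the corresponding real part using \eqref{RA}, collapses by the parallelogram identity to get a bound of the form $\tfrac{C_0}{2}\bigl(G(\theta,\theta)+G(\eta,\eta)\bigr)$, then recovers the geometric mean (and hence the sharp constant $C_0$) by the usual scaling trick $\theta\mapsto\sqrt\varepsilon\,\theta$, $\eta\mapsto\eta/\sqrt\varepsilon$, optimizing over $\varepsilon>0$, and finally removes the auxiliary restriction that $\sum(A^{hk}_{as}\theta^k,\eta^h)$ be purely imaginary by a phase rotation $\eta\mapsto i e^{i\psi}\eta$. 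You instead view $H:=iB$ as a Hermitian sesquilinear form, quotient by the null space of $G$, invoke the Riesz representation to write $H(x,y)=G(Tx,y)$ with $T$ self-adjoint, and use that the numerical radius of a self-adjoint operator equals its operator norm. Both arguments deliver the sharp $C_0$; yours is more conceptual and isolates a clean reusable lemma, while the paper's is entirely elementary and does not require passing to a quotient or appealing to any spectral-type fact. Two small remarks: your claim that ``naive polarization'' would only give $2C_0$ or $3C_0$ does not really contrast with the paper, since the paper's polarization \emph{combined with the scaling optimization} does recover $C_0$ exactly; and the step where you quotient by $\ker G$ deserves one more line -- from $G(x,x)=0$ you must deduce $H(x,y)=0$ for all $y$, which follows by expanding $H(x+ty,x+ty)$ and using the quadratic bound (it is not enough to note $H(x,x)=0$). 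With that clarification the proof is complete and correct.
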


\begin{proof}
Assume first  that  $\sum_{h,k=1}^d (A_{as}^{hk}\eta^k, \theta^h)\in i\R$.  Then, by \eqref{imforma} and observing that
$\sum_{h,k=1}^d(A^{hk}_{as}\eta^k,\theta^h)=-\overline{\sum_{h,k=1}^d(A^{hk}_{as}\theta^k,\eta^h)}$, it follows that
  \begin{align*}
  &\frac{i}{4} \bigg ( {\rm Im}\sum_{h,k=1}^d (A^{hk}(\eta^k + \theta^k), \eta^h +\theta^h) -
   {\rm Im}\sum_{h,k=1}^d (A^{hk}(\eta^k - \theta^k), \eta^h -\theta^h)\bigg )\\
 &\frac 1 4 \bigg ( \sum_{h,k=1}^d (A_{as}^{hk}(\eta^k + \theta^k), \eta^h +\theta^h) -
   \sum_{h,k=1}^d (A_{as}^{hk}(\eta^k - \theta^k), \eta^h -\theta^h)\bigg )\\
 &= \frac 1 2 \bigg ( \sum_{h,k=1}^d (A_{as}^{hk}\theta^k, \eta^h) -\overline{ \sum_{h,k=1}^d (A_{as}^{hk}\theta^k, \eta^h)} \bigg )=\sum_{h,k=1}^d (A_{as}^{hk}\theta^k, \eta^h).
 \end{align*}
 Hence, using \eqref{RA} we obtain
 \begin{align}
 &\bigg |\sum_{h,k=1}^d (A_{as}^{hk}\theta^k, \eta^h)\bigg |\notag\\
\leq & \frac{C_0}{4} \bigg ( \sum_{h,k=1}^d {\rm Re} \left(M^{hk}(\eta^k + \theta^k), \eta^h +\theta^h\right)+\sum_{h,k=1}^d{\rm Re} \left(M^{hk}(\eta^k - \theta^k), \eta^h -\theta^h\right)\bigg )\nonumber\\
=& \frac{C_0}{2} {\rm Re}  \sum_{h,k=1}^d (M^{hk}\eta^k, \eta^h) +  \frac{C_0}{2} {\rm Re}  \sum_{h,k=1}^d (M^{hk}\theta^k, \theta^h).
\label{ho} 
\end{align}

Writing \eqref{ho} with $\theta^k$ and $\eta^h$ being replaced, respectively, by to $\sqrt{\varepsilon}\theta^k$ and $\sqrt{\varepsilon}\eta^h$, we get
\begin{equation*}
\bigg |\sum_{h,k=1}^d (A_{as}^{hk}\theta^k, \eta^h)\bigg | \leq
   \frac{c_0}{2 \sqrt{\varepsilon} }{\rm Re} \sum_{h,k=1}^d (M^{hk}\eta^k, \eta^h) + \sqrt{\varepsilon }\frac{c_0}{2} {\rm Re}  \sum_{h,k=1}^d (M^{hk}\theta^k, \theta^h)
   \end{equation*}
for every $\varepsilon>0$. By taking the minimum over $\varepsilon>0$, estimate \eqref{modifCS1} follows in this particular case.

To get estimate \eqref{modifCS1} in the general case, we assume that $\sum_{h,k=1}^d (A_{as}^{hk}\theta^k,\eta^h)$ does not belong to $i\mathbb R$ and write  $\sum_{h,k=1}^d (A_{as}^{hk}\theta^k,\eta^h)=r e^{i\psi}$ for some $\psi\in\R$ and $r\ge 0$. Since $\sum_{h,k=1}^d(A_{as}^{hk}\theta^k,ie^{i\psi} \eta^h)\in i\R$,
applying \eqref{modifCS1} to $\theta^1,\ldots,\theta^d$ and $ie^{i\psi}\eta^1,\ldots,ie^{i\psi}\eta^d$, estimate \eqref{modifCS1} follows in its full generality.

Finally, we assume that condition \eqref{ondulato} is satisfied.
Then, combining \eqref{CS} and \eqref{ondulato}, we can estimate
\begin{align}
\bigg |\sum_{h,k=1}^d (A_s^{hk}\theta^k, \eta^h)\bigg |
\leq C_1\bigg( {\rm Re}\sum_{h,k=1}^d (M^{hk}\theta^k, \theta^h)\bigg)^{\frac{1}{2}}\bigg( {\rm Re}\sum_{h,k=1}^d (M^{hk}\eta^k, \eta^h)\bigg)^{\frac{1}{2}}.
\label{form-14}
\end{align}
Hence, from \eqref{modifCS1} and \eqref{form-14} and recalling that $A^{hk}=A^{hk}_s+A^{hk}_{as}$ for every $h,k=1,\ldots,d$, we conclude that
\begin{align*}
\bigg |\sum_{h,k=1}^d (A^{hk}\theta^k, \eta^h)\bigg | \le &(C_0+C_1) \bigg({\rm Re}\sum_{h,k=1}^d (M^{hk}\theta^k, \theta^h)\bigg)^{\frac{1}{2}}\bigg ({\rm Re}\sum_{h,k=1}^d (M^{hk}\eta^k, \eta^h)\bigg)^{\frac{1}{2}}.
\end{align*}
The proof is complete.
\end{proof}

 By applying Proposition \ref{CSconf} with $M^{hk}=A^{hk}$, we get the following result.

\begin{coro}
Suppose that there exists a positive constant $C_0>0$ such that
 \begin{equation*}  
 \bigg | {\rm Im}\sum_{h,k=1}^d (A^{hk}\theta^k, \theta^h)\bigg |\leq C_0{\rm Re}\sum_{h,k=1}^d (A^{hk}\theta^k, \theta^h)
  \end{equation*}
for every $\theta^1,\ldots,\theta^d\in \Cm$. Then,
  \begin{eqnarray*}
  \label{modifCS}\bigg |\sum_{h,k=1}^d (A^{hk}\theta^k, \eta^h)\bigg | \leq (1+C_0)
\bigg( {\rm Re}\sum_{h,k=1}^d (A^{hk}\theta^k, \theta^h)\bigg)^{\frac 1 2} \bigg( {\rm Re}\sum_{h,k=1}^d (A^{hk}\eta^k, \eta^h)\bigg)^{\frac 1 2}
\end{eqnarray*}
for every $\theta^1,\ldots,\theta^d, \eta^1,\ldots, \eta^d\in\Cm$.
\end{coro}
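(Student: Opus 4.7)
The plan is to derive the corollary as a direct specialization of Proposition~\ref{CSconf}, taking $M^{hk}=A^{hk}$ for all $h,k=1,\ldots,d$, and then verifying that both hypotheses \eqref{RA} and \eqref{ondulato} of that proposition hold in this setup.

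First, with the choice $M^{hk}=A^{hk}$, hypothesis \eqref{RA} is literally the hypothesis of the corollary, so there is nothing to do there. The only slightly non-trivial step is to verify \eqref{ondulato}. The key observation is that the hypothesis of the corollary forces the real form $\theta\mapsto {\rm Re}\sum_{h,k=1}^d(A^{hk}\theta^k,\theta^h)$ to be nonnegative: indeed, the left-hand side $|{\rm Im}\sum (A^{hk}\theta^k,\theta^h)|$ is nonnegative, so the inequality $|{\rm Im}\sum (A^{hk}\theta^k,\theta^h)|\le C_0\,{\rm Re}\sum(A^{hk}\theta^k,\theta^h)$ forces the right-hand side to be $\ge 0$ as well. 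Consequently \eqref{ondulato} holds with $M^{hk}=A^{hk}$ and $C_1=1$ (both sides being identical and nonnegative).

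With both hypotheses of Proposition~\ref{CSconf} verified with constants $C_0$ (unchanged) and $C_1=1$, the second part of that proposition directly yields
\begin{equation*}
\bigg|\sum_{h,k=1}^d(A^{hk}\theta^k,\eta^h)\bigg|\le (C_0+1)\bigg({\rm Re}\sum_{h,k=1}^d(A^{hk}\theta^k,\theta^h)\bigg)^{1/2}\bigg({\rm Re}\sum_{h,k=1}^d(A^{hk}\eta^k,\eta^h)\bigg)^{1/2}
\end{equation*}
for every $\theta^1,\ldots,\theta^d,\eta^1,\ldots,\eta^d\in\mathbb C^m$, which is precisely the desired inequality. There is essentially no obstacle here; the content of the corollary is entirely carried by Proposition~\ref{CSconf}, and the only check is the free nonnegativity of the real part, which is automatic from the form of the assumption.
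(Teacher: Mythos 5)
Your proposal is correct and follows exactly the paper's approach: the paper proves the corollary simply by applying Proposition~\ref{CSconf} with $M^{hk}=A^{hk}$. You have merely spelled out the small detail (implicit in the paper) that the hypothesis forces ${\rm Re}\sum_{h,k}(A^{hk}\theta^k,\theta^h)\ge 0$, so that \eqref{ondulato} holds with $C_1=1$.
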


\begin{rmk} 
{\rm If $(A^{hk})^T=A^{kh}$ for every $h,k=1,\ldots,d$, then the imaginary part of $\sum_{h,k=1}^d (A^{hk}\theta^k, \theta^h)$ is zero, by  \eqref{imforma}. So, if estimate \eqref{realLegendrefunc} holds true, then condition \eqref{comLeg} is satisfied if the real part of $\sum_{h,k=1}^d (A^{hk}\theta^k, \theta^h)$ is nonnegative
for every $\theta^1,\ldots,\theta^d\in\Cm$.}
\end{rmk}

\section{Derivation of $\Lambda_p$}
\label{app-B}
We need to compute the supremum of $\psi_2$ defined in \eqref{f2} on the set 
\begin{eqnarray*}
\Omega_p=\{(\varepsilon_0, \varepsilon_1,\varepsilon_2, \varepsilon_3)\in (0,+\infty)^4: \tilde \psi_1(\varepsilon_0, \varepsilon_1, \varepsilon_2, \varepsilon_3)\ge 0\},
\end{eqnarray*}
where 
$\tilde \psi_1=\psi_1$ if $p\ge 2$ and
$\tilde \psi_1=\psi_1+p-2$ if $p\in (1,2)$.
Since $\psi_2$ is continuous on $\Omega$, the supremum of $\psi_2$ on $\Omega_p$ is achieved on $\partial \Om$. To simplify the notation, we set
\begin{eqnarray*}
\tilde\psi_1= e_1-a_1 \varepsilon_0-b_1\varepsilon_1-c_1\varepsilon_2-d_1\varepsilon_3
\end{eqnarray*}
and
\begin{eqnarray*}
\psi_2=1-\frac{a_2}{\varepsilon_0}-\frac{b_2}{\varepsilon_1}-\frac{c_2}{\varepsilon_2}-\frac{d_2}{\varepsilon_3},
\end{eqnarray*}
where 
\begin{eqnarray*}
\begin{array}{ll}
a_1=4a_2=(p-1)(1+c_0)\gamma,\quad &b_1=4b_2v_0^3=(p-1)(1+c_0)C_\gamma,\\[1mm]
c_2=c_1=\mathscr{C}(p-1)\gamma, &d_1=v_0^3d_2=\mathscr{C}(p-1)C_\gamma,
\end{array}
\end{eqnarray*}
\begin{eqnarray*}
e_1=\left\{
\begin{array}{ll}
1-2\mathscr{C}(2p-3), \quad\quad &p \ge 2\\[1mm]
p-1-2\mathscr{C}(5-2p),\quad \quad &p\in (1,2),
\end{array}
\right.
\end{eqnarray*}
\begin{eqnarray*}
a_2=\frac{(p-1)(1+c_0)\gamma}{4},\quad\quad b_2=\frac{(p-1)(1+c_0)C_\gamma}{4v_0^3},
\end{eqnarray*}
\begin{eqnarray*}
c_2=c_1=\mathscr{C}(p-1)\gamma,\qquad d_2=\frac{\mathscr{C}(p-1)C_\gamma}{v_0^3}.
\end{eqnarray*}
Since the function $\psi_2$ does not admit stationary points in the interior of $\Omega$ and it is continuous over the closed and bounded set $\Omega$, it achieves its maximum value at the boundary of $\Omega$. Hence, we can assume that
$\varepsilon_0=a_1^{-1}(e_1-b_1\varepsilon_1-c_1\varepsilon_2-d_1\varepsilon_3)$ and compute the supremum of the function
\begin{eqnarray*}
\tilde\psi_2(\varepsilon_1,\varepsilon_2, \varepsilon_3)=1-\frac{a_1a_2}{e_1-b_1\varepsilon_1-c_1\varepsilon_2-d_1\varepsilon_3}-\frac{b_2}{\varepsilon_1}-\frac{c_2}{\varepsilon_2}-\frac{d_2}{\varepsilon_3}
\end{eqnarray*}
on the set 
\begin{eqnarray*}
\Gamma=\{(\varepsilon_1,\varepsilon_2, \varepsilon_3)\in (0,+\infty)^3: e_1-b_1\varepsilon_1-c_1\varepsilon_2-d_1\varepsilon_3>0\}.
\end{eqnarray*}
The coefficient $e_1$ is strictly positive due to condition $p\in \left (1+\frac{6\mathscr{C}}{4{\mathscr C}+1},\frac{3}{2}+\frac{1}{4\mathscr C}\right )$ and  allows us to consider simultaneously  the cases $p \ge 2$ and $p \in (1,2)$.
Since $\tilde\psi_2$ diverges to $-\infty$ on $\partial \Gamma$, it achieves its supremum on $\Gamma$. To find it, we solve the system $\nabla \tilde f_2=(0,0,0)$, i.e.,
\begin{equation*}\left\{
\begin{array}{ll}
b_2(e_1-b_1\varepsilon_1-c_1\varepsilon_2-d_1\varepsilon_3)^2=\varepsilon_1^2 a_1a_2b_1,\\[1mm]
c_2(e_1-b_1\varepsilon_1-c_1\varepsilon_2-d_1\varepsilon_3)^2=\varepsilon_2^2 a_1a_2c_1,\\[1mm]
d_2(e_1-b_1\varepsilon_1-c_1\varepsilon_2-d_1\varepsilon_3)^2=\varepsilon_3^2 a_1a_2d_1,
\end{array}
\right.
\end{equation*}
whence 
\begin{equation*}\left\{
\begin{array}{ll}
(\sqrt{a_1a_2}+\sqrt{b_1b_2})\varepsilon_1+c_1\sqrt{\frac{b_2}{b_1}}\varepsilon_2+d_1\sqrt{\frac{b_2}{b_1}}\varepsilon_3=e_1\sqrt{\frac{b_2}{b_1}},\\[1.5mm]
b_1\sqrt{\frac{c_2}{c_1}}\varepsilon_1+ (\sqrt{a_1a_2}+\sqrt{c_1c_2})\varepsilon_2+d_1\sqrt{\frac{c_2}{c_1}}\varepsilon_3=e_1\sqrt{\frac{c_2}{c_1}},\\[1.5mm]
b_1\sqrt{\frac{d_2}{d_1}}\varepsilon_1+c_1\sqrt{\frac{d_2}{d_1}}\varepsilon_2+(\sqrt{a_1a_2}+\sqrt{d_1d_2})\varepsilon_3=e_1\sqrt{\frac{d_2}{d_1}},
\end{array}
\right.
\end{equation*}
which yields
\begin{align*}
&\varepsilon_1=\frac{e_1\sqrt{b_2}}{\sqrt{b_1}(\sqrt{a_1a_2}+\sqrt{b_1b_2}+\sqrt{c_1c_2}+\sqrt{d_1d_2})},\\[1mm]
&\varepsilon_2=\frac{e_1\sqrt{c_2}}{\sqrt{c_1}(\sqrt{a_1a_2}+\sqrt{b_1b_2}+\sqrt{c_1c_2}+\sqrt{d_1d_2})},\\[1mm]
&\varepsilon_3=\frac{e_1\sqrt{d_2}}{\sqrt{d_1}(\sqrt{a_1a_2}+\sqrt{b_1b_2}+\sqrt{c_1c_2}+\sqrt{d_1d_2})}
\end{align*}
and, consequently, 
we infer that the supremum of $\psi_2$ is given by
\begin{align*}
&1-\frac{(\sqrt{a_1a_2}+\sqrt{b_1b_2}+\sqrt{c_1c_2}+\sqrt{d_1d_2})^2}{e_1}\\
=&  \left\{
\begin{array}{ll}
1-\displaystyle\frac{(p-1)^2(\gamma+C_\gamma v_0^{-3/2})^2(1+c_0+2\mathscr{C})^2}{4[p-1-2\mathscr{C}(5-2p)]},\quad p \in (1,2),\\[3mm]
\displaystyle
1-\frac{(p-1)^2(\gamma+C_\gamma v_0^{-3/2})^2(1+c_0+2\mathscr{C})^2}{4[(1-2\mathscr{C}(2p-3)]},\quad p \ge 2.
\end{array}
\right.
\end{align*}

\end{document}